\newtheorem*{corollary*}{Corollary}
\newtheorem{theorem}{Theorem}[section]
\newtheorem{corollary}[theorem]{Corollary}
\newtheorem{lemma}[theorem]{Lemma}
\newtheorem*{proposition*}{Proposition}
\newtheorem{question}[theorem]{Question}
\newtheorem*{claim*}{Claim}
\theoremstyle{definition}
\newtheorem{definition}[theorem]{Definition}
\newtheorem{example}[theorem]{Example}
\newtheorem*{notation}{Notation}
\newtheorem{definitiontheorem}[theorem]{Definition-Theorem}
\theoremstyle{remark}
\numberwithin{equation}{theorem}
\renewcommand{\mod}{\operatorname{mod}}
\newcommand{\Hom}{\operatorname{Hom}}
\newcommand{\add}{\operatorname{\mathsf{add}}}
\newcommand{\Ext}{\operatorname{Ext}}
\newcommand{\Tau}{\mathsf{T}}
\newcommand{\tilt}{\operatorname{\mathsf{tilt}}}
\newcommand{\stilt}{\operatorname{\mathsf{s-tilt}}}
\newcommand{\sttilt}{\operatorname{\mathsf{s\tau -tilt}}}
\newcommand{\Z}{{\mathbb{Z}}}
\newcommand{\Endhom}{\operatorname{End}}
\newcommand{\dimvec}{\operatorname{\underline{dim}}}
\begin{document}
\title{Taking tilting modules from the poset of support tilting modules }
\author{Ryoichi Kase}
\address{Department of Mathematics,
Nara Women's University, Kitauoya-Nishimachi, Nara city, Nara 630-8506, Japan}
\email{r-kase@cc.nara-wu.ac.jp}
\keywords{tilting modules, tilting mutations, representations of quivers, }
\thanks{2010 {\em Mathematics Subject Classification.} Primary 16G20; Secondary 16D80, 16G30}
\begin{abstract}
C. Ingalls and H. Thomas defined support tilting modules for path algebras. From $\tau$-tilting theory introduced by T. Adachi, O. Iyama and I. Reiten, a partial order on the set of
basic tilting modules defined by D. Happel and L. Unger is extended as a partial order on the set of support tilting modules. 
In this paper, we study a combinatorial relationship between the poset of basic tilting modules and basic support tilting modules.
 We will show that the subposet of tilting modules is uniquely determined by the poset structure of the set of support tilting modules.
\end{abstract}
\maketitle

\section{Introduction}
Tilting theory first appeared in an article by Brenner and Butler\;\cite{BB}. In that article the notion of a tilting module for finite dimensional algebras was introduced. 
Tilting theory  now appears in many areas of mathematics, for example algebraic geometry, theory of algebraic groups and algebraic topology. Let $T$ be a tilting module 
for a finite dimensional algebra $\Lambda$ and let $B=\Endhom_{A}(T)$. Then Happel showed that the two bounded derived categories
$\mathcal{D}^{\mathrm{b}}(A)$ and $\mathcal{D}^{\mathrm{b}}(B)$
are equivalent as triangulated category\;\cite{H}.  Therefore, classifying tilting modules is an important problem.

Tilting mutation introduced by Riedtmann and Schofield is an approach to this problem. It is an operation which gives a new tilting module from given one by replacing an
 indecomposable direct summand.
  They also introduced  a tilting quiver whose vertices are 
  (isomorphism classes of) basic tilting modules and arrows correspond to mutations.  
  Happel and Unger defined a partial order on the set of basic tilting modules 
   and showed that the tilting quiver coincides with the Hasse quiver of this poset. 
   However, tilting mutation is often impossible depending on a choice of an indecomposable direct summand. Support $\tau$-tilting modules introduced by Adachi, Iyama and Reiten 
   are generalization of tilting modules. They showed that a mutation (resp.\;a partial order) on the set of (isomorphism classes of) basic tilting modules 
    is extended as an operation (resp.\;a partial order) on the set of 
   (isomorphism classes of) support $\tau$-tilting modules. They also showed that support $\tau$-tilting mutation has following nice properties:\\  
$\bullet$\;Support $\tau$-tilting mutation is always possible.\\
$\bullet$\;Support $\tau$-tilting quiver coincides with the Hasse quiver of the poset of support $\tau$-tilting modules.  

\begin{notation}
Throughout this paper, let $\Lambda$ be a finite dimensional algebra over an algebraically closed field $k$.
\begin{enumerate}
\item We always assume that $\Lambda$ is basic and indecomposable.
\item We denote by $\mod\Lambda$ 
 the category of finitely generated right $\Lambda$-modules.
\item We denote by $\tau$ the Auslander-Reiten translation of $\mod\Lambda$.
\item A module means a finitely generated right module.
\item We denote by $\tilt(\Lambda)$ (respectively, $\stilt(\Lambda)$, $\sttilt(\Lambda)$) the (partially ordered) set of (isomorphism classes of) basic tilting 
(respectively, support tilting, support $\tau$-tilting) $\Lambda$-modules (see Section\;2 below for the definition).
\end{enumerate}
\end{notation}

In this paper, we consider a combinatorial relationship between $\sttilt(\Lambda)$ and $\tilt(\Lambda)$. Since tilting or support $\tau$-tilting mutation is introduced for the aim of 
obtaining many tilting modules, the following is an interesting question.

\begin{question}
Is the set of tilting modules $\tilt(\Lambda)$ uniquely determined by the poset-structure of $\sttilt(\Lambda)$?.
\end{question} 
The main result of this paper is the following.

\begin{theorem}
\label{mainthm}
Let $\Lambda$ and $\Gamma$ be two finite dimensional basic hereditary algebras. If $\rho$
is a poset isomorphism from $\sttilt(\Lambda)$ to $\sttilt(\Gamma)$, then the restriction of $\rho$ to $\tilt(\Lambda)$ induces
a poset isomorphism
\[\rho|_{\tilt(\Lambda)}:\tilt(\Lambda)\simeq \tilt(\Gamma).\]
\end{theorem}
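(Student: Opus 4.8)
The plan is to characterize, purely in terms of the poset structure of $\sttilt(\Lambda)$, which elements correspond to (honest) tilting modules, and then to show that $\rho$ must respect this characterization. The starting observation is that $\sttilt(\Lambda)$ has a unique maximal element, namely $\Lambda$ itself, and a unique minimal element, namely the zero module; these are clearly preserved by any poset isomorphism. The support $\tau$-tilting modules that are actually tilting are exactly those $T$ with no trivial direct summand, i.e.\ those whose support is the whole quiver. I would look for a poset-theoretic invariant that detects this. A first candidate: since $\Lambda$ is hereditary, support tilting modules coincide with support $\tau$-tilting modules, and the tilting modules form an order-theoretically coherent ``top part'' of the poset — more precisely, the interval $[\mathcal{T}, \Lambda]$ for a tilting module $\mathcal{T}$ lies entirely inside $\tilt(\Lambda)$, because any support $\tau$-tilting module above a tilting module is again tilting. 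So $\tilt(\Lambda)$ is an order filter (up-set) of $\sttilt(\Lambda)$, and it suffices to show that this particular order filter is canonically determined.

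The key structural input I would exploit is that the Hasse quiver of $\sttilt(\Lambda)$ is the support $\tau$-tilting mutation quiver, so local combinatorics around a vertex record the number of indecomposable summands and which mutations stay tilting versus which pass to a module with smaller support. Concretely, a basic support $\tau$-tilting module $T$ has $|T|$ indecomposable summands, and $|T| = n := \#\{\text{simples}\}$ exactly when $T$ is tilting. The number $n$ is itself poset-invariant: it equals the length of any maximal chain from $0$ to $\Lambda$ (the Hasse quiver is ranked by number of summands, decreasing by one along each arrow in a mutation that removes a summand without adding one back — more carefully, one uses that $0$ has exactly $n$ summands to ``remove'' and $\Lambda$ has $n$), or alternatively it is the number of arrows out of $\Lambda$ (equivalently into $0$). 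Having recovered $n$, I would then argue that a vertex $T$ is tilting if and only if it admits $n$ arrows in the mutation quiver at each of which mutation is possible in both directions in the appropriate sense — but the cleaner route is: $T$ is tilting iff it lies in the order filter generated by the ``$2$-term'' region, or iff the down-set below $T$ together with $T$ contains a copy of $\sttilt$ of a hereditary algebra on $n$ vertices with the same top. Among these, I expect the most robust characterization to be: \emph{$T \in \tilt(\Lambda)$ if and only if every element of $\sttilt(\Lambda)$ lying above $T$ has the same ``height'' behavior as $\Lambda$}, formalized via the rank function $T \mapsto |T|$ which I must first show is order-theoretically definable.

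The main obstacle will be showing that the rank function $|{-}|$ (number of indecomposable summands) is recoverable from the bare poset, and that ``$|T| = n$'' is the correct poset-intrinsic criterion for being tilting. For the first point I would use that the Hasse quiver is exactly the mutation quiver and that each single mutation changes $|T|$ by at most... actually by exactly $0$ (mutation at a summand keeps the count, possibly swapping a nonzero summand for a ``zero'' one) — so I must instead rank by support size, recovering support via the observation that the minimal elements below $T$ in a suitable sense, or the number of simple modules ``missing'' from $T$, is detected by how far $T$ sits above $0$. The honest technical heart is therefore a lemma: for hereditary $\Lambda$, the function sending $T$ to its support is determined by the poset, e.g.\ because the principal down-set $\{S \in \sttilt(\Lambda) : S \le T\}$ is itself isomorphic to $\sttilt$ of the algebra $\Lambda/\langle e \rangle$ where $e$ is the idempotent complementary to the support of $T$, and this smaller poset is recognizable by its own unique maximal element and cardinality. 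Once support — hence tilting-ness — is poset-intrinsic, $\rho$ and $\rho^{-1}$ both preserve it, so $\rho$ restricts to a bijection $\tilt(\Lambda) \to \tilt(\Gamma)$ which, being a restriction of a poset isomorphism to a sub-poset defined in both directions, is automatically a poset isomorphism. I would allocate essentially all the work to establishing the support-recovery lemma and keep the final assembly short.
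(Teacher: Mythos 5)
Your overall framing is on the right track and partly matches the paper: for hereditary $\Lambda$ the tilting modules are exactly the support tilting modules with full support, and $\tilt(\Lambda)$ is indeed an order filter --- in fact it is the principal up-set of the injective cogenerator $D\Lambda$, and the paper's entire proof consists of identifying $D\Lambda$ order-theoretically. But the two mechanisms you propose for making ``full support'' poset-intrinsic both fail, and the gap they leave is precisely where all the work lies. First, the rank/height idea: $\stilt(\Lambda)$ is not graded. Already for $Q\colon 1\to 2$ the saturated chains $0<S(2)<\Lambda$ and $0<S(1)<S(1)\oplus P(1)<\Lambda$ from $0$ to $\Lambda$ have different lengths, and for the Kronecker quiver there are infinite saturated chains, so ``how far $T$ sits above $0$'' is not a well-defined poset invariant and cannot detect $\#\mathrm{supp}(T)$. (Recovering $n$ as $\#s(\Lambda)$ or $\#e(0)$ is fine, but that is not where the difficulty is.) Second, the support-recovery lemma you lean on is false as stated: the principal down-set $\{S\le T\}$ is \emph{not} isomorphic to $\stilt(\Lambda/\langle e\rangle)$. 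Take $T=D\Lambda$ for $Q\colon 1\to 2$: its down-set is $\{0,\,S(1),\,D\Lambda\}$ (three elements; note $\Lambda\not\le D\Lambda$ and $S(2)\not\le D\Lambda$ since $\Ext^1_\Lambda(S(1),S(2))\neq 0$), whereas $\stilt(\Lambda)$ has five elements. The subposet that \emph{is} isomorphic to $\stilt$ of the quotient is $\{S\mid Q(S)_0\subseteq Q(T)_0\}$, but that is not a principal down-set and is not obviously definable from the order alone --- assuming it is would be circular, since definability of supports is exactly what you are trying to prove.

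The paper fills this gap by a genuinely local, inductive argument that your outline does not supply. It filters the quiver by repeatedly adjoining sources, producing injective modules $I_0=0,\,I_1,\dots,I_m=D\Lambda$, and shows (Lemmas 3.2--3.6) that among the direct predecessors of $I_i$ one can distinguish, using only configurations of elements at distance two or three in the Hasse diagram (e.g.\ whether there exist $X\in e(T)$, $Y\in e(I_i)$, $Z\in e(Y)$ with $X>Z$, and the more elaborate ``ladder'' condition encoded by $\mathcal{F}(i,T,r)$), exactly those predecessors of the form $I_i\oplus I(a)$; then $I_{i+1}$ is the minimum of $\bigcap_{X\in e_1(i)}\{T\ge X\}$. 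Induction identifies $D\Lambda$, hence $\tilt(\Lambda)=\{T\ge D\Lambda\}$, and the restriction statement follows. If you want to pursue your route, you would need to replace both failed steps with an argument of this kind; as written, the proposal asserts the key lemma rather than proving it, and the one concrete justification offered for it is incorrect.
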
 

\section{Preliminaries}
\subsection{Tilting modules}

In this subsection we recall the definition of tilting modules.
For a module $M$, we denote by $|M|$ the number of non-isomorphic indecomposable direct summand of $M$.

\begin{definition}
A $\Lambda$-module $M$ is said to be a \emph{partial tilting module} if it satisfies following conditions. 
\begin{enumerate}[(i)]
\item $\mathrm{pd}\;T\leq 1.$
\item $\Ext_{\Lambda}^{1}(T,T)=0.$
\end{enumerate} 

If partial tilting module $T$ satisfies $|M|=|\Lambda|$, then we call $T$ a \emph{tilting module}. The set of non-isomorphic basic tilting modules of $\Lambda$ is denoted by $\tilt\Lambda$
\end{definition}
For a $\Lambda$-module $M$, we put $M^{\perp_{1}}:=\{X\in \mod \Lambda\mid \Ext_{\Lambda}^{1}(M,X)=0\}.$
\begin{definitiontheorem}\cite{HU1}
Let $T_{1}$ and $T_{2}$ be two tilting modules. We write $T_{1}\leq T_{2}$ if
$T^{\perp_{1}}_{1}\subset T^{\perp_{1}}_{2}$. Then $\leq$ defines a partial order
on $\tilt(\Lambda)$. 
\end{definitiontheorem}
It is known that if $T$ is a tilting module, then $X$ is in $T^{\perp_{1}}$ if and only if $X$ is a factor module of finite direct sums of copies of $T$.
Therefore we have the following\;\cite{HU2}\;:
\[T\geq T^{'}\Leftrightarrow \Ext_{\Lambda}^{1}(T,T^{'})=0.\]

\subsection{Support $\tau$-tilting modules}

The notion of support $\tau$-tilting modules which were introduced in \cite{AIR}
is a generalization of that of tilting modules.

Let us recall the definition of support $\tau$-tilting modules.

\begin{definition}\cite{AIR}
Let $M$ be a $\Lambda$-module and $P$ be a projective $\Lambda$-module.
\begin{enumerate}[(1)]
\item $M$ is said to be a $\tau$-\emph{rigid module} if it satisfies
$\Hom_{\Lambda}(M, \tau M)=0$.
\item  $(M,P)$ is said to be a $\tau$-\emph{rigid pair} 
if $M$ is a $\tau$-rigid module and $\Hom_{\Lambda}(P,M)=0$.
\item $(M,P)$ is called a \emph{support $\tau$-tilting pair} 
if it is a $\tau$-rigid pair with $|M|+|P|=|\Lambda|$.
We then call $M$ a \emph{support $\tau$-tilting module}.
The set of non-isomorphic basic support $\tau$-tilting modules of $\Lambda$ is denoted by $\sttilt\Lambda$.
\end{enumerate}
\end{definition}
We note that if $M\in \sttilt(\Lambda)$, then there is a unique (up to isomorphism) basic projective module $P$ such that $(M,P)$ is a support $\tau$-tilting pair\;\cite{AIR}.
\begin{definitiontheorem}\cite{AIR}
Let $(M,P)$ and $(M^{'},P^{'})$ be two support $\tau$-tilting pair. We write $M\leq M^{'}$ if $\Hom_{\Lambda}(M,\tau M^{'})=0$ and $\add P^{'}\subset \add P $. 
Then $\leq$ defines a partial order on $\sttilt(\Lambda)$. 
\end{definitiontheorem} 
We call $(N,U)$ an almost complete support $\tau$-tilting pair if $(N,U)$ is a $\tau$-rigid pair with $|N|+|U|=|\Lambda|-1$.
\begin{theorem}\cite{AIR}
\begin{enumerate}[{\rm (1)}]
\item Let $(N,U)$ be a basic almost complete support $\tau$-tilting pair. Then $(N,U)$ is a direct summand of exactly two support $\tau$-tilting pairs.
\item Let $(M,P)$ and $(M^{'},P^{'})$ be two support $\tau$-tilting pair. Then there is an edge $M-M^{'}$ in the underlying graph of
 the Hasse quiver of $\sttilt(\Lambda)$ if and only if there exists basic almost complete support $\tau$-tilting pair $(N,U)$ such that $(N,U)$
  is a direct summand of $(M,P)$ and $(M^{'},P^{'})$.
\end{enumerate}   
\end{theorem}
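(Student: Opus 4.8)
The plan is to prove the two statements together by realizing the completions of an almost complete pair as the two endpoints of a single \emph{mutation}, constructed from an approximation (exchange) sequence, and then matching this mutation with the covering relations of the partial order on $\sttilt(\Lambda)$. I would begin with the elementary numerical reduction. If $(N,U)$ is a basic almost complete support $\tau$-tilting pair, then $|N|+|U|=|\Lambda|-1$, so any support $\tau$-tilting pair having $(N,U)$ as a direct summand is obtained by adjoining exactly one indecomposable object: either an indecomposable $\tau$-rigid module $X$ giving $(N\oplus X,\,U)$, or an indecomposable projective $Q$ giving $(N,\,U\oplus Q)$. Thus completions are parametrized by this single added indecomposable, and part (1) reduces to showing there are precisely two admissible choices. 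Existence of at least one completion I would secure by a Bongartz-type argument: from $(N,U)$ one passes to an appropriate functorially finite torsion class, such as $\mathcal{T}={}^{\perp}(\tau N)\cap\{X\mid \Hom_{\Lambda}(U,X)=0\}$, and takes its indecomposable $\Ext$-projective objects; verifying that these assemble into a support $\tau$-tilting pair (the maximal completion in the order) is the first technical step.

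The heart of part (1) is the construction of the \emph{second} completion. Fixing one completion, say $(N\oplus X,\,U)$ with $X$ indecomposable, I would form a minimal left $\add N$-approximation $X\xrightarrow{f}N_{0}$ and distinguish two cases. If $f$ is injective, its cokernel has a unique indecomposable summand $X'\not\cong X$ for which $(N\oplus X',\,U)$ is again a support $\tau$-tilting pair; if $f$ is not injective, the support must shrink and the other completion is $(N,\,U\oplus Q)$ for a suitable indecomposable projective $Q$. This yields a second completion and, simultaneously, the exchange sequence linking the two, which will be reused in part (2).

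The remaining, and hardest, part of (1) is the bound of \emph{at most} two. Here I would show that the indecomposable contributed by a completion is forced: any completion is determined by, and determines, the indecomposable $\Ext$-projective object it adds to the associated torsion class, and the approximation analysis above shows there are exactly two such objects. The clean way to package this is through an order-preserving correspondence between $\sttilt(\Lambda)$ and the poset of functorially finite torsion classes, under which completions of $(N,U)$ become the torsion classes filling a single minimal interval. Establishing this correspondence and the minimality of that interval is the step I expect to be the main obstacle, since it is where the finer homological structure of $\tau$-rigid pairs, rather than mere bookkeeping, is required.

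For part (2) I would use (1) to define mutation and then identify it with the covering relation. Given a common almost complete summand $(N,U)$ of $(M,P)$ and $(M',P')$, the two completions are comparable: using the exchange sequence together with the defining condition $\Hom_{\Lambda}(M,\tau M')=0$ of the order, one checks that exactly one of $M<M'$, $M'<M$ holds, say $M>M'$. That $M$ \emph{covers} $M'$ I would deduce by passing to $\mathrm{Fac}\,M\supsetneq \mathrm{Fac}\,M'$ and showing no torsion class lies strictly between, the difference being generated by the single exchanged indecomposable; this gives the edge in the Hasse quiver. Conversely, given a covering relation $M\gtrdot M'$, I would show that $M$ and $M'$ agree in exactly $|\Lambda|-1$ indecomposable constituents, so that deleting the one differing constituent produces the required common almost complete pair. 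The decisive point, which I expect to be the main obstacle alongside the uniqueness in (1), is to rule out that adjacent support $\tau$-tilting modules differ in more than one indecomposable summand; this I would extract from the same interval analysis of torsion classes, since a smaller common summand would admit a completion lying strictly inside $(M',M)$, contradicting that the relation is a covering.
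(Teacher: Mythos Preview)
The paper does not prove this theorem at all: it is stated with the citation \cite{AIR} and used as a black box, so there is no ``paper's own proof'' to compare against. Your outline is essentially the argument given in the original Adachi--Iyama--Reiten paper, so as a sketch it is on the right track; nothing you wrote contradicts what the present paper uses.

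That said, since you asked for a review of the proposal itself: the overall architecture (Bongartz-type maximal completion, left $\add N$-approximation to produce the second completion, and the bijection with functorially finite torsion classes to get both the ``at most two'' bound and the identification of mutation with covering relations) is correct. Two places deserve care. First, in your case split for the approximation $X\xrightarrow{f}N_0$, the dichotomy is not literally ``$f$ injective vs.\ not injective'' but rather whether the cokernel $Y$ of $f$ is zero or not (equivalently, whether $X$ lies in $\add N$ after the approximation data is interpreted correctly); when $Y=0$ one must identify the correct projective $Q$, namely the one at the vertex supporting $X$, and check $\Hom_\Lambda(Q,N)=0$. Second, for the converse in part~(2), the assertion that a covering relation forces exactly $|\Lambda|-1$ common indecomposable constituents really does need the torsion-class machinery you allude to (specifically, that the interval $[\mathrm{Fac}\,M',\mathrm{Fac}\,M]$ in functorially finite torsion classes has length one and that the $\Ext$-projectives of the two endpoints differ in a single indecomposable); this is the genuine content and cannot be shortcut by a counting argument alone.
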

For a basic $\tau$-rigid module $U$, we denote by $\sttilt_{U}(\Lambda):=\{T\in \sttilt(\Lambda)\mid U\in \add T\}$.   
\begin{theorem}\cite{J}
Let $U$ be a basic partial $\tau$-tilting module. Then there is a finite dimensional algebra $C$ with $|C|=|\Lambda|-|U|$ such that 
$\sttilt_{U}(\Lambda)\simeq \sttilt(C)$.\\
  
\end{theorem}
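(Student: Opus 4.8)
The plan is to realize $\sttilt_U(\Lambda)$ as an interval in the lattice of functorially finite torsion classes of $\Lambda$, and then to identify that interval with the \emph{entire} torsion theory of a smaller module category carved out by a wide subcategory. Throughout I use the Adachi--Iyama--Reiten correspondence \cite{AIR} $M\mapsto\operatorname{Fac}M$, which is an isomorphism of posets from $\sttilt(\Lambda)$ onto the poset of functorially finite torsion classes; concretely, the partial order on $\sttilt(\Lambda)$ of the Definition--Theorem above is transported to inclusion of the classes $\operatorname{Fac}M$.

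First I would pin down the two ends of the interval. Since $U$ is $\tau$-rigid it admits a Bongartz completion to a support $\tau$-tilting module $T=U\oplus E$, which is the maximum of $\sttilt_U(\Lambda)$; its torsion class is $\mathcal{T}^{+}:={}^{\perp}(\tau U)=\{X:\Hom_\Lambda(X,\tau U)=0\}$, the largest torsion class in which $U$ is $\operatorname{Ext}$-projective (here $U\in\mathcal{T}^{+}$ is exactly the $\tau$-rigidity of $U$). Dually let $\mathcal{T}^{-}$ be the smallest torsion class containing $U$, whose torsion-free class is $(\mathcal{T}^{-})^{\perp}=U^{\perp}=\{X:\Hom_\Lambda(U,X)=0\}$. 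Using the description of the summands of a support $\tau$-tilting module as the indecomposable $\operatorname{Ext}$-projectives of its torsion class, one verifies that $U\in\add M$ if and only if $\mathcal{T}^{-}\subseteq\operatorname{Fac}M\subseteq\mathcal{T}^{+}$. Hence the correspondence restricts to a poset isomorphism between $\sttilt_U(\Lambda)$ and the functorially finite torsion classes in the interval $[\mathcal{T}^{-},\mathcal{T}^{+}]$.

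Next I would introduce the $\tau$-perpendicular subcategory
\[
\mathcal{W}\;=\;\mathcal{T}^{+}\cap(\mathcal{T}^{-})^{\perp}\;=\;{}^{\perp}(\tau U)\cap U^{\perp},
\]
and show it is a functorially finite wide subcategory of $\mod\Lambda$ (abelian and closed under extensions), with a projective generator $P_{\mathcal{W}}$ obtained by reducing the Bongartz complement $E$ into $\mathcal{W}$. A functorially finite wide subcategory is equivalent to $\mod C$ for the finite dimensional algebra $C:=\End_\Lambda(P_{\mathcal{W}})^{\mathrm{op}}$; since $E$ has exactly $|\Lambda|-|U|$ indecomposable summands, $C$ has $|\Lambda|-|U|$ indecomposable projectives, i.e. $|C|=|\Lambda|-|U|$. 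The core claim is then that intersection with $\mathcal{W}$,
\[
\mathcal{S}\;\longmapsto\;\mathcal{S}\cap\mathcal{W},
\]
is an isomorphism of lattices from $[\mathcal{T}^{-},\mathcal{T}^{+}]\subseteq\operatorname{tors}\Lambda$ onto $\operatorname{tors}\mathcal{W}\cong\operatorname{tors}C$, with inverse sending a torsion class $\mathcal{U}$ of $\mathcal{W}$ to the torsion class of $\Lambda$ generated by $\mathcal{T}^{-}$ together with $\mathcal{U}$. Combined with \cite{AIR} for $C$ and with the previous paragraph, this produces the asserted poset isomorphism $\sttilt_U(\Lambda)\simeq\sttilt(C)$.

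The main obstacle is precisely this lattice isomorphism together with the compatibility of functorial finiteness. Two points require genuine work. First, one must check that $\mathcal{S}\cap\mathcal{W}$ is itself a torsion class in the abelian category $\mathcal{W}$ and that the two displayed maps are mutually inverse and order-preserving; this rests on a careful analysis of the torsion pair $(\mathcal{T}^{+},\mathcal{F}^{+})$ and of the canonical \emph{reduction} functor $\mathcal{T}^{+}\to\mathcal{W}$, which kills the $\mathcal{T}^{-}$-torsion part, sends $U$ to $0$ and $E$ to $P_{\mathcal{W}}$. Second, and more delicately, one must confirm that the bijection matches the functorially finite torsion classes on the two sides, so that it restricts to $\sttilt$ and not merely to all of $\operatorname{tors}$; equivalently, that $\tau$-rigidity is preserved under reduction even though the Auslander--Reiten translate of $\mathcal{W}\simeq\mod C$ is \emph{not} the restriction of $\tau_\Lambda$. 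Passing through torsion classes is exactly what lets one bypass a direct comparison of $\tau_C$ with $\tau_\Lambda$, which would otherwise be the sticking point.
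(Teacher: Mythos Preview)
The paper does not prove this statement; it is quoted from \cite{J} as background and used without argument. There is therefore no proof in the paper to compare your proposal against.

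For what it is worth, your outline is essentially Jasso's own strategy in \cite{J}: identify $\sttilt_{U}(\Lambda)$ via the Adachi--Iyama--Reiten bijection with the interval $[\operatorname{Fac}U,\;{}^{\perp}(\tau U)]$ of functorially finite torsion classes, realize the $\tau$-perpendicular subcategory $U^{\perp}\cap{}^{\perp}(\tau U)$ as $\mod C$ with $C\cong\End_{\Lambda}(T_{U})/\langle e_{U}\rangle$ for the Bongartz completion $T_{U}=U\oplus E$, and show that intersection with this subcategory gives a bijection of the interval with $\sttilt(C)$. The two obstacles you single out---mutual inversity of the reduction and extension maps on torsion classes, and preservation of functorial finiteness in both directions---are precisely where the work in \cite{J} lies. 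One small tightening: for the implication ``$\operatorname{Fac}U\subseteq\operatorname{Fac}M\subseteq{}^{\perp}(\tau U)\Rightarrow U\in\add M$'' you should invoke the Auslander--Reiten formula to get $\Ext^{1}_{\Lambda}(U,\operatorname{Fac}M)=0$ from $\Hom_{\Lambda}(\operatorname{Fac}M,\tau U)=0$, and then the fact from \cite{AIR} that the $\operatorname{Ext}$-projectives of $\operatorname{Fac}M$ are exactly $\add M$.
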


\subsection{Hereditary case}
Let $Q$ be a finite connected acyclic quiver. We denote by $Q_{0}$\;(resp.\;$Q_{1}$) the set of vertices\;(resp.\;arrows) of $Q$. From now on, we assume that $\Lambda$ is a path algebra $kQ$. In this paper, for any paths $w:a_{0}\stackrel{\alpha_{1}}{\rightarrow}a_{1}\stackrel{\alpha_{2}}{\rightarrow}\cdots \stackrel{\alpha_{r}}{\rightarrow} a_{r}$ and $w^{'}:b_{0}\stackrel{\beta_{1}}{\rightarrow}b_{1}\stackrel{\beta_{2}}{\rightarrow}\cdots \stackrel{\beta_{s}}{\rightarrow} b_{s}$ in $Q$, the product is defined by \[w\cdot w^{'}:=\left\{\begin{array}{ll}
 a_{0}\stackrel{\alpha_{1}}{\rightarrow}a_{1}\stackrel{\alpha_{2}}{\rightarrow}\cdots \stackrel{\alpha_{r}}{\rightarrow} a_{r}=b_{0}\stackrel{\beta_{1}}{\rightarrow}b_{1}\stackrel{\beta_{2}}{\rightarrow}\cdots \stackrel{\beta_{s}}{\rightarrow} b_{s} & \mathrm{if\ }a_{r}=b_{0} \\ 
 0 & \mathrm{if\ }a_{r}\neq b_{0},
 \end{array}\right.\]
 in $kQ$.
 For a module $M\in \mod \Lambda$, we denote by $Q(M)$ the full subquiver of $Q$ with $Q(M)_{0}=\mathrm{supp}(M):=\{a\in Q_{0}\mid (\dimvec M)_{a}>0\}$. By definition, we can regard $M$ as a sincere $kQ(M)$-module. 
\begin{definition}\cite{AIR,IT}
A $\Lambda$-module $M$ is said to be a \emph{support tilting module} if $M$ is a tilting $kQ(M)$-module.
\end{definition}
Since $\Lambda$ is a finite dimensional hereditary algebra, we have $\sttilt(\Lambda)=\stilt(\Lambda)\;(\mathrm{see}\ $\cite{AIR}) and the partial order on $\stilt(\Lambda)$ is defined as follows:
\[M\geq M^{'}\Leftrightarrow \Ext_{\Lambda}^{'}(M,M^{'})=0\ \mathrm{and}\ Q(M^{'})_{0}\subset Q(M)_{0}.\ \ (M,M^{'}\in \stilt(\Lambda))\]
\begin{theorem}\cite{J}
 Let $N$ be a basic partial tilting module. Then there is a finite dimensional hereditary algebra $C$ with $|C|=|\Lambda|-|N|$ such that 
$\stilt_{N}(\Lambda)\simeq \stilt(C)$.\\ 
\end{theorem}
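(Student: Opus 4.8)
The plan is to derive this statement as a direct consequence of the support $\tau$-tilting version (Theorem 0.4 applied to the $\tau$-rigid module $N$), combined with the observation that over a hereditary algebra the classes $\stilt$ and $\sttilt$ coincide and that this coincidence is compatible with the reduction. First I would recall that since $\Lambda = kQ$ is hereditary, a basic partial tilting module $N$ is in particular a basic $\tau$-rigid module, so Theorem 0.4 supplies a finite dimensional algebra $C$ with $|C| = |\Lambda| - |N|$ and a poset isomorphism $\sttilt_{N}(\Lambda) \simeq \sttilt(C)$. Because $\sttilt(\Lambda) = \stilt(\Lambda)$ in the hereditary case, the left-hand side is precisely $\stilt_{N}(\Lambda)$, so what remains is (a) to show $C$ can be chosen hereditary, and (b) to identify $\sttilt(C)$ with $\stilt(C)$ — the latter being automatic once (a) holds.

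The main work is therefore step (a): showing the Jasso reduction algebra $C$ is hereditary. The cleanest route is to use a concrete model for $C$. Taking $N$ to be a partial tilting module, extend it (possibly after passing to the support) to a tilting module; alternatively, use that $\sttilt_{N}(\Lambda)$ has a maximum element, namely the tilting module $T$ with $N \in \add T$ that is largest in the order, and set $B = \End_{\Lambda}(T)$, which by Happel's theorem is derived equivalent to $\Lambda$. Under this identification $\sttilt_{N}(\Lambda)$ corresponds to an interval of $\sttilt(B)$ whose maximum is $B$ itself, and the Jasso algebra $C$ is the "$\tau$-tilting reduction" of $B$ at the projective $B$-module corresponding to $N$; since $B$ is derived equivalent to a hereditary algebra and $N$ corresponds to a summand of the projective, one shows $C$ is (Morita equivalent to) $kQ'$ for $Q'$ the quiver obtained from $Q$ by an explicit combinatorial operation — deleting the vertices in $\mathrm{supp}(N)$ and adjoining arrows recording paths through them. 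I would verify directly that this $Q'$ is acyclic and that $\stilt_{N}(\Lambda) \simeq \stilt(kQ')$ as posets by tracking supports: a support tilting module $M$ with $N \in \add M$ restricts to a support tilting $kQ'$-module on the complementary vertex set, and this assignment is an order-preserving bijection with order-preserving inverse.

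The key steps, in order, are: (1) invoke $\sttilt(\Lambda) = \stilt(\Lambda)$ to rewrite $\stilt_{N}(\Lambda) = \sttilt_{N}(\Lambda)$; (2) apply Theorem 0.4 to get $C$ with $\sttilt_{N}(\Lambda) \simeq \sttilt(C)$ and $|C| = |\Lambda| - |N|$; (3) present an explicit construction of $C$ (via $\End$ of a suitable tilting module, or via the quiver operation above) and check $C$ is hereditary, so that $\sttilt(C) = \stilt(C)$; (4) conclude by composing the isomorphisms. I expect step (3) — producing a genuinely hereditary model of the reduction algebra and checking acyclicity — to be the main obstacle; the order-theoretic bookkeeping in steps (1), (2) and (4) is routine given the results already cited from \cite{AIR}, \cite{J}, and \cite{HU1,HU2}.
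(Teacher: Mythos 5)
This statement is quoted in the paper from Jasso's work \cite{J} and no proof of it appears in the text, so there is no in-paper argument to compare yours against; I can only assess the proposal on its own terms. Your steps (1), (2) and (4) are unproblematic: over a hereditary algebra a basic partial tilting module is $\tau$-rigid, $\sttilt(\Lambda)=\stilt(\Lambda)$, and the general reduction theorem supplies $C$ with $|C|=|\Lambda|-|N|$ and $\sttilt_{N}(\Lambda)\simeq\sttilt(C)$. But, as you yourself note, the entire content of the hereditary refinement is your step (3), and that step is where the proposal has a genuine gap.

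The concrete model you offer for $C$ --- delete the vertices of $\mathrm{supp}(N)$ and adjoin arrows recording paths through them --- is false for a general partial tilting module. Take $Q\colon 1\rightarrow 2$ and $N=P(1)$: then $\mathrm{supp}(N)=Q_{0}$, so your recipe returns the empty quiver, whereas $|C|=|\Lambda|-|N|=1$, and indeed $\stilt_{P(1)}(\Lambda)=\{P(1)\oplus P(2)>P(1)\oplus I(1)\}$ is isomorphic to $\stilt(k)$ (see Example 2.13(2) of the paper). The recipe is correct essentially only when $N$ is a sum of simple modules; in general $C$ is not determined by $\mathrm{supp}(N)$, so there is no ``complementary vertex set'' on which to restrict, and the order-preserving bijection you describe in that language cannot be set up. Your alternative route, $B=\End_{\Lambda}(T)$ for $T$ the Bongartz completion (which is indeed the maximum of $\sttilt_{N}(\Lambda)$) and $C$ the idempotent quotient of $B$ killing the summand $N$, is literally Jasso's construction, but the assertion that $C$ is hereditary does not follow from $B$ being derived equivalent to a hereditary algebra --- idempotent quotients of tilted algebras need not be hereditary, and you give no argument. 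The missing idea is the perpendicular category theorem of Geigle--Lenzing and Schofield: for hereditary $\Lambda$ the subcategory $N^{\perp}=\{X\in\mod\Lambda\mid \Hom_{\Lambda}(N,X)=0=\Ext^{1}_{\Lambda}(N,X)\}$ is an exact abelian, extension-closed subcategory of $\mod\Lambda$, hence has vanishing $\Ext^{2}$ and is equivalent to $\mod C'$ for a hereditary algebra $C'$ with $|C'|=|\Lambda|-|N|$; one then identifies Jasso's reduction with (the support tilting theory of) $C'$. Without importing that result, or proving an equivalent of it, step (3) is not carried out and the proof is incomplete.
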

Let $M\in \mod \Lambda$ and let $P$ be a projective $\Lambda$-module.
We set \[\Tau_{\Lambda}(M,P)=\Tau(M,P):=(\tau M\oplus \nu(P), M_{\mathrm{pr}} )\] where $\nu$ is the Nakayama functor and $M_{\mathrm{pr}}$ is a maximal projective direct summand. We also set \[\Tau^{-}(M,P):=(\tau^{-}M\oplus P, \nu^{-}M_{\mathrm{in}})\] where $M_{\mathrm{in}}$ is a maximal injective direct summand.
 Note that
\[\Tau\Tau^{-}(M,P)=(M,P)=\Tau^{-}\Tau(M,P).\]
\begin{lemma}
\label{1}\cite{AIR}
$(M,P)$ is a support tilting pair if and only if $\Tau(M,P)$ is a support tilting pair. In particular $\Tau$ and $\Tau^{-}$ induces a graph automorphism
\[G(\stilt(\Lambda))\simeq G(\stilt(\Lambda)),\]
where $G(\stilt\Lambda)$ is the underlying graph of the Hasse quiver of $\stilt(\Lambda)$.
\end{lemma}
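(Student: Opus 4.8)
The plan is to deduce both assertions from two structural properties of the operation $\Tau$, regarded as acting on the set of all $\tau$-rigid pairs: first, that $\Tau$ carries $\tau$-rigid pairs to $\tau$-rigid pairs and preserves the integer $|M|+|P|$; second, that $\Tau$ is compatible with passage to direct summands. Granting these, the relations $\Tau\Tau^{-}=\Tau^{-}\Tau=\mathrm{id}$ show that $\Tau$ is a bijection on the set of $\tau$-rigid pairs, with inverse $\Tau^{-}$, restricting to bijections on $\stilt(\Lambda)$ (the pairs with $|M|+|P|=|\Lambda|$) and on the basic almost complete support $\tau$-tilting pairs (those with $|M|+|P|=|\Lambda|-1$). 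The first of these is already the initial claim, and compatibility with direct summands, combined with the recalled Adachi--Iyama--Reiten theorem that a basic almost complete support $\tau$-tilting pair is a common direct summand of exactly two (Hasse-adjacent) support $\tau$-tilting pairs, then upgrades it to the graph automorphism.

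First I would do the numerical bookkeeping. Write $M=M'\oplus M_{\mathrm{pr}}$ with $M_{\mathrm{pr}}$ the maximal projective summand, so that $\tau M=\tau M'$. Since $\tau$ induces a bijection between the isomorphism classes of indecomposable non-projective and of indecomposable non-injective modules, $\tau M'$ has no injective summand and $|\tau M'|=|M'|$, while $\nu P$ is injective with $|\nu P|=|P|$. Hence $N:=\tau M'\oplus\nu P$ is basic with $|N|=|M'|+|P|$, and $\Tau(M,P)=(N,M_{\mathrm{pr}})$ satisfies $|N|+|M_{\mathrm{pr}}|=|M|+|P|$. It remains to show that $\Tau(M,P)$ is a $\tau$-rigid pair whenever $(M,P)$ is.

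For this I would use three ingredients: that over a hereditary algebra $\tau$-rigidity coincides with rigidity; the identities $\Hom_{\Lambda}(Y,\nu P)\cong D\Hom_{\Lambda}(P,Y)$ and $\Hom_{\Lambda}(P,\tau X)\cong D\Ext_{\Lambda}^{1}(X,P)$ for $P$ projective (the latter read off a two-term projective resolution); and the Auslander--Reiten formula, which over a hereditary algebra reduces the vanishing of $\Ext_{\Lambda}^{1}(\tau M',\tau M')$ and of $\Ext_{\Lambda}^{1}(\nu P,\tau M')$ to that of $\Hom_{\Lambda}(M',\tau M')$ and of $\Hom_{\Lambda}(M',\nu P)$ respectively (here one uses that $M'$ has no projective summand, which makes the quotient by maps through injectives harmless). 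Then $\Ext_{\Lambda}^{1}(N,N)=0$: the summands landing in $\nu P$ vanish because $\nu P$ is injective, so $\Ext_{\Lambda}^{1}(-,\nu P)=0$; $\Ext_{\Lambda}^{1}(\tau M',\tau M')=0$ because $\Hom_{\Lambda}(M',\tau M')=0$ by the $\tau$-rigidity of $M$; and $\Ext_{\Lambda}^{1}(\nu P,\tau M')=0$ because $\Hom_{\Lambda}(M',\nu P)\cong D\Hom_{\Lambda}(P,M')=0$ since $\Hom_{\Lambda}(P,M)=0$. Similarly $\Hom_{\Lambda}(M_{\mathrm{pr}},N)=0$ reduces, via the two displayed identities, to $\Hom_{\Lambda}(P,M_{\mathrm{pr}})=0$ and $\Ext_{\Lambda}^{1}(M',M_{\mathrm{pr}})=0$, which hold because $\Hom_{\Lambda}(P,M)=0$ and $M$ is rigid. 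Thus $\Tau(M,P)$ is a $\tau$-rigid pair, hence a support tilting pair when $|M|+|P|=|\Lambda|$. Running the same computation with projectives and injectives interchanged and $(\tau,\nu)$ replaced by $(\tau^{-},\nu^{-})$ shows that $\Tau^{-}$ has the analogous property; since $\Tau\Tau^{-}=\Tau^{-}\Tau=\mathrm{id}$, this gives the stated equivalence that $(M,P)$ is a support tilting pair if and only if $\Tau(M,P)$ is.

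For the graph automorphism, extend $\Tau$ to all $\tau$-rigid pairs by the same formula; if $(N_{0},U_{0})$ is a direct summand of $(M,P)$, then reading off maximal projective summands shows at once that $\Tau(N_{0},U_{0})$ is a direct summand of $\Tau(M,P)$. Now suppose $M-M'$ is an edge of $G(\stilt(\Lambda))$, with corresponding support $\tau$-tilting pairs $(M,P)$ and $(M',P')$; by the theorem just cited there is a basic almost complete support $\tau$-tilting pair $(N_{0},U_{0})$ that is a common direct summand. Then $\Tau(N_{0},U_{0})$ is again a basic almost complete support $\tau$-tilting pair (by the first two properties of $\Tau$) and is a common direct summand of the two distinct support $\tau$-tilting pairs $\Tau(M,P)$ and $\Tau(M',P')$, so there is an edge joining the support tilting modules underlying $\Tau(M,P)$ and $\Tau(M',P')$; the converse is identical with $\Tau^{-}$ in place of $\Tau$. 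Hence $\Tau$, and likewise its inverse $\Tau^{-}$, induces a graph automorphism of $G(\stilt(\Lambda))$. I expect the genuine obstacle to be the $\tau$-rigidity computation of the third paragraph: one must apply the Auslander--Reiten formula in precisely the form that is valid in the presence of the projective and injective summands which actually arise, and this is exactly why it pays to split off $M_{\mathrm{pr}}$ at the outset and to route everything through the clean identities for $\nu P$ and for projective modules.
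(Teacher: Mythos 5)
The paper offers no proof of this lemma at all: it is imported verbatim from Adachi--Iyama--Reiten via the citation \cite{AIR}, so there is nothing internal to compare your argument against. Judged on its own terms, your reconstruction is correct and is essentially the standard argument from that source: the numerical bookkeeping (splitting off $M_{\mathrm{pr}}$, using that $\tau$ is a bijection from indecomposable non-projectives to non-injectives and $\nu$ an equivalence $\proj\Lambda\to\operatorname{inj}\Lambda$) does show $\Tau$ preserves $|M|+|P|$ and basicness; the reduction of $\tau$-rigidity to rigidity over a hereditary algebra together with the identities $\Hom_\Lambda(X,\nu P)\cong D\Hom_\Lambda(P,X)$ and $\Hom_\Lambda(P,\tau X)\cong D\Ext^1_\Lambda(X,P)$ correctly disposes of every $\Ext^1$- and $\Hom$-condition on $(\tau M'\oplus\nu P,\,M_{\mathrm{pr}})$; and the passage from the bijection on pairs to the graph automorphism via common almost complete summands is exactly the right use of the quoted Adachi--Iyama--Reiten theorem. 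One small imprecision: for $\Ext^1_\Lambda(\tau M',\tau M')\cong D\underline{\Hom}_\Lambda(\tau^{-}\tau M',\tau M')=D\underline{\Hom}_\Lambda(M',\tau M')$ the relevant quotient is by maps factoring through \emph{projectives} (it is the other form of the Auslander--Reiten formula that quotients by maps through injectives); this is harmless here since you only need that vanishing of $\Hom_\Lambda(M',\tau M')$ forces vanishing of the quotient, but the parenthetical should be corrected.
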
  
\begin{example}
\label{ex1}
The following are well-known example. Let $\overrightarrow{\Delta}$  be a 2-point acyclic quiver with $\overrightarrow{\Delta}_{0}=\{1,2\}$.

$(1)$ If $\overrightarrow{\Delta}$ is not connected then $\stilt(k\overrightarrow{\Delta})$ is as follows:\vspace{5pt}
\[
\unitlength 0.1in
\begin{picture}( 16.4000, 16.0000)( 22.2000,-22.3000)
\put(28.0000,-8.0000){\makebox(0,0)[lb]{$S(1)\oplus S(2)$}}%
\put(22.2000,-15.8000){\makebox(0,0)[lb]{$S(1)$}}%
\put(38.0000,-15.8000){\makebox(0,0)[lb]{$S(2)$}}%
\put(32.0000,-24.0000){\makebox(0,0)[lb]{$0$}}%
%
\special{pn 8}%
\special{pa 2960 840}%
\special{pa 2500 1300}%
\special{fp}%
\special{sh 1}%
\special{pa 2500 1300}%
\special{pa 2562 1268}%
\special{pa 2538 1262}%
\special{pa 2534 1240}%
\special{pa 2500 1300}%
\special{fp}%
%
\special{pn 8}%
\special{pa 3860 1710}%
\special{pa 3400 2170}%
\special{fp}%
\special{sh 1}%
\special{pa 3400 2170}%
\special{pa 3462 2138}%
\special{pa 3438 2132}%
\special{pa 3434 2110}%
\special{pa 3400 2170}%
\special{fp}%
%
\special{pn 8}%
\special{pa 3360 840}%
\special{pa 3820 1300}%
\special{fp}%
\special{sh 1}%
\special{pa 3820 1300}%
\special{pa 3788 1240}%
\special{pa 3782 1262}%
\special{pa 3760 1268}%
\special{pa 3820 1300}%
\special{fp}%
%
\special{pn 8}%
\special{pa 2540 1720}%
\special{pa 3000 2180}%
\special{fp}%
\special{sh 1}%
\special{pa 3000 2180}%
\special{pa 2968 2120}%
\special{pa 2962 2142}%
\special{pa 2940 2148}%
\special{pa 3000 2180}%
\special{fp}%
\end{picture}

$(2)$ If there is a unique arrow from $1$ to $2$. Then $\stilt(k\overrightarrow{\Delta})$ is as follows:\vspace{5pt}
\[
\unitlength 0.1in
\begin{picture}( 26.4500, 16.8000)( 17.0000,-25.2000)
\put(20.3000,-19.7000){\makebox(0,0)[lb]{$P(2)$}}%
\put(17.0000,-10.1000){\makebox(0,0)[lb]{$P(1)\oplus P(2)$}}%
\put(40.8000,-10.1000){\makebox(0,0)[lb]{$P(1)\oplus I(1)=I(1)\oplus I(2)$}}%
\put(42.1000,-19.7000){\makebox(0,0)[lb]{$I(1)$}}%
%
\special{pn 8}%
\special{pa 2140 1110}%
\special{pa 2140 1710}%
\special{fp}%
\special{sh 1}%
\special{pa 2140 1710}%
\special{pa 2160 1644}%
\special{pa 2140 1658}%
\special{pa 2120 1644}%
\special{pa 2140 1710}%
\special{fp}%
%
\special{pn 8}%
\special{pa 4340 1110}%
\special{pa 4340 1710}%
\special{fp}%
\special{sh 1}%
\special{pa 4340 1710}%
\special{pa 4360 1644}%
\special{pa 4340 1658}%
\special{pa 4320 1644}%
\special{pa 4340 1710}%
\special{fp}%
%
\special{pn 8}%
\special{pa 2180 2080}%
\special{pa 2980 2520}%
\special{fp}%
\special{sh 1}%
\special{pa 2980 2520}%
\special{pa 2932 2470}%
\special{pa 2934 2494}%
\special{pa 2912 2506}%
\special{pa 2980 2520}%
\special{fp}%
%
\special{pn 8}%
\special{pa 4270 2080}%
\special{pa 3470 2520}%
\special{fp}%
\special{sh 1}%
\special{pa 3470 2520}%
\special{pa 3538 2506}%
\special{pa 3518 2494}%
\special{pa 3520 2470}%
\special{pa 3470 2520}%
\special{fp}%
\put(31.3000,-26.9000){\makebox(0,0)[lb]{$0$}}%
%
\special{pn 8}%
\special{pa 2700 940}%
\special{pa 3900 940}%
\special{fp}%
\special{sh 1}%
\special{pa 3900 940}%
\special{pa 3834 920}%
\special{pa 3848 940}%
\special{pa 3834 960}%
\special{pa 3900 940}%
\special{fp}%
\end{picture}

$(3)$ If there are at least two arrows from $1$ to $2$. Then $\stilt(k\overrightarrow{\Delta})$ is as follows:\vspace{5pt}
\[
\unitlength 0.1in
\begin{picture}( 26.4500, 50.1000)( 17.0000,-63.2000)
\put(20.3000,-57.7000){\makebox(0,0)[lb]{$P(2)$}}%
\put(17.0000,-48.1000){\makebox(0,0)[lb]{$P(1)\oplus P(2)$}}%
\put(40.8000,-48.1000){\makebox(0,0)[lb]{$I(1)\oplus I(2)$}}%
\put(42.1000,-57.7000){\makebox(0,0)[lb]{$I(1)$}}%
%
\special{pn 8}%
\special{pa 2140 4910}%
\special{pa 2140 5510}%
\special{fp}%
\special{sh 1}%
\special{pa 2140 5510}%
\special{pa 2160 5444}%
\special{pa 2140 5458}%
\special{pa 2120 5444}%
\special{pa 2140 5510}%
\special{fp}%
%
\special{pn 8}%
\special{pa 4340 4910}%
\special{pa 4340 5510}%
\special{fp}%
\special{sh 1}%
\special{pa 4340 5510}%
\special{pa 4360 5444}%
\special{pa 4340 5458}%
\special{pa 4320 5444}%
\special{pa 4340 5510}%
\special{fp}%
%
\special{pn 8}%
\special{pa 2180 5880}%
\special{pa 2980 6320}%
\special{fp}%
\special{sh 1}%
\special{pa 2980 6320}%
\special{pa 2932 6270}%
\special{pa 2934 6294}%
\special{pa 2912 6306}%
\special{pa 2980 6320}%
\special{fp}%
%
\special{pn 8}%
\special{pa 4270 5880}%
\special{pa 3470 6320}%
\special{fp}%
\special{sh 1}%
\special{pa 3470 6320}%
\special{pa 3538 6306}%
\special{pa 3518 6294}%
\special{pa 3520 6270}%
\special{pa 3470 6320}%
\special{fp}%
\put(31.3000,-64.9000){\makebox(0,0)[lb]{$0$}}%
%
\special{pn 8}%
\special{pa 2140 3510}%
\special{pa 2140 2910}%
\special{fp}%
\special{sh 1}%
\special{pa 2140 2910}%
\special{pa 2120 2978}%
\special{pa 2140 2964}%
\special{pa 2160 2978}%
\special{pa 2140 2910}%
\special{fp}%
\put(17.0000,-38.1000){\makebox(0,0)[lb]{$P(1)\oplus \tau^{-} P(2)$}}%
\put(17.0000,-28.1000){\makebox(0,0)[lb]{$\tau^{-} P(1)\oplus \tau^{-} P(2)$}}%
%
\special{pn 8}%
\special{pa 2140 4510}%
\special{pa 2140 3910}%
\special{fp}%
\special{sh 1}%
\special{pa 2140 3910}%
\special{pa 2120 3978}%
\special{pa 2140 3964}%
\special{pa 2160 3978}%
\special{pa 2140 3910}%
\special{fp}%
%
\special{pn 8}%
\special{pa 2140 2510}%
\special{pa 2140 1910}%
\special{fp}%
\special{sh 1}%
\special{pa 2140 1910}%
\special{pa 2120 1978}%
\special{pa 2140 1964}%
\special{pa 2160 1978}%
\special{pa 2140 1910}%
\special{fp}%
%
\special{pn 8}%
\special{sh 1}%
\special{ar 2140 1310 10 10 0  6.28318530717959E+0000}%
\special{sh 1}%
\special{ar 2140 1510 10 10 0  6.28318530717959E+0000}%
\special{sh 1}%
\special{ar 2140 1710 10 10 0  6.28318530717959E+0000}%
\put(40.8000,-38.1000){\makebox(0,0)[lb]{$\tau I(1)\oplus I(2)$}}%
%
\special{pn 8}%
\special{pa 4340 3910}%
\special{pa 4340 4510}%
\special{fp}%
\special{sh 1}%
\special{pa 4340 4510}%
\special{pa 4360 4444}%
\special{pa 4340 4458}%
\special{pa 4320 4444}%
\special{pa 4340 4510}%
\special{fp}%
\put(40.8000,-28.1000){\makebox(0,0)[lb]{$\tau I(1)\oplus \tau I(2)$}}%
%
\special{pn 8}%
\special{pa 4340 2910}%
\special{pa 4340 3510}%
\special{fp}%
\special{sh 1}%
\special{pa 4340 3510}%
\special{pa 4360 3444}%
\special{pa 4340 3458}%
\special{pa 4320 3444}%
\special{pa 4340 3510}%
\special{fp}%
%
\special{pn 8}%
\special{pa 4340 1910}%
\special{pa 4340 2510}%
\special{fp}%
\special{sh 1}%
\special{pa 4340 2510}%
\special{pa 4360 2444}%
\special{pa 4340 2458}%
\special{pa 4320 2444}%
\special{pa 4340 2510}%
\special{fp}%
%
\special{pn 8}%
\special{sh 1}%
\special{ar 4340 1310 10 10 0  6.28318530717959E+0000}%
\special{sh 1}%
\special{ar 4340 1510 10 10 0  6.28318530717959E+0000}%
\special{sh 1}%
\special{ar 4340 1710 10 10 0  6.28318530717959E+0000}%
\end{picture}
\end{example}

\section{Proof of Theorem\;\ref{mainthm}}
In this section, we give a proof of Theorem\;\ref{mainthm}.
Let $Q$ be a finite connected quiver, $\Lambda:=kQ$. For any $i\in \Z_{\geq 0}$, we define a full subquiver $Q(i)$ of $Q$ as follows:\\
$\bullet\;Q(0)_{0}:=\emptyset$\\
$\bullet\;Q(i)_{0}:=Q_{i-1}\cup\{a\in Q_{0}\mid a\mathrm{\ is\ a\ source\ of\ }Q\setminus Q_{i-1}\}$.

Let $m\in \Z_{\geq 1}$ be a minimum integer satisfying $Q(m)=Q$. For any $i\leq m$, we set $I_{0}:=0$ and $I_{i}:=\bigoplus_{a\in Q(i)_{0}}I(a)$. Note that we can regard $I_{i}\in \mod \Lambda$ as a basic injective tilting module of $kQ(i)$.

\subsection{Neighbours of $I_{i}$}
For a support tilting module $M$, we denote by $e(M)$ the set of direct predecessors of $M$ and $s(M)$ the set of direct successors of $M$. If $I$ is an injective support tilting module, then we have 
\[e(I)=\{I_{Q(I)\cup \{a\}}\mid a\in Q_{0}\setminus Q(I)_{0}\}\sqcup \{I/I(a)\oplus \tau_{Q(I)}S(a)\mid a\ \mathrm{is \ not\ a\ sink\ of\ }Q(I)\},\]
where $I_{Q(I)\cup\{a\}}$ be a injective tilting module of $k(Q(I)\cup \{a\})$. We also have
\[s(I)=\{I/I(a)\mid a\ \mathrm{is\ a\ sink\ of\ }Q(I)\}.\]
 It is easy to check that $I_{Q(I)\cup\{a\}}$ is injective in $\mod \Lambda$ if and only if $a$ is a source of $Q\setminus Q(I)$. In particular we obtain the following:
\[e(I_{i})=e_{1}(i)\sqcup e_{2}(i)\sqcup e_{3}(i),\;s(I_{i})=\{I_{i}/I(d)\mid d\ \mathrm{is\ a\ sink\ of}\ Q(i)\},\]
where \[\begin{array}{lll}
e_{1}(i)&:=&\{I_{i}\oplus I(a)\mid a\in Q(i+1)_{0}\setminus Q(i)_{0}\}\\
e_{2}(i)&:=&\{I_{Q_{i}\cup\{b\}}\mid b\in Q_{0}\setminus Q(i+1)_{0}\}\\
e_{3}(i)&:=&\{I_{i}/I(c)\oplus \tau_{Q(i)}S(c)\mid c\in Q(i)_{0}\mathrm{\ is\ not\ a\ sink\ of\ }Q(i)\}.\\
\end{array}
\]
\subsection{Determining injective predecessors} In this subsection we show that the set of injective predecessors $e_{1}(i)$ of $I_{i}$ is determined by poset structure of $\stilt(\Lambda)$. First we consider the case $i=0$.
\begin{lemma}\label{lemma1}
Let $a,b\in Q_{0}$. Then there is an arrow $a\rightarrow b$ in $Q$ if and only if there are $X\in e(S(a))$ and $Y\in e(S(b))$ such that
$X<Y$.
\end{lemma}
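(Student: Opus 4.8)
The plan is to translate the combinatorial condition on the right-hand side into quiver-theoretic information about $Q$ by analyzing the structure of the posets $\stilt(kQ(S(a)))$ and $\stilt(kQ(S(b)))$, which by definition are one-vertex posets since $S(a)$ and $S(b)$ are simple-support tilting modules. Concretely, $S(a)$ is a tilting module over $kQ(a)$ where $Q(a)$ is the single vertex $a$, so the direct predecessors $e(S(a))$ in $\stilt(\Lambda)$ are exactly the support tilting modules $M$ with $S(a) \in \add M$, $Q(M)_0 \supsetneq \{a\}$, and $M$ covering $S(a)$ in the Hasse quiver — equivalently, by Theorem~2.3 (the $\tau$-rigid reduction $\stilt_{S(a)}(\Lambda)\simeq \stilt(C)$ for a smaller hereditary $C$), these correspond to the simple projectives of $C$ together with... so the members of $e(S(a))$ are of the form $S(a)\oplus N$ for suitable rank-two support tilting modules. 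The support of each such $N$ is a two-vertex subquiver $\{a,b'\}$ of $Q$, and $S(a)\oplus S(b')$ itself is always such a predecessor when $a$ and $b'$ are not joined by an arrow (cf. Example~2.9(1)), while if there is an arrow between them the rank-two piece is a non-semisimple module (cf. Example~2.9(2),(3)).

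First I would make precise which two-vertex support tilting modules lie above $S(a)$. Using Example~2.9: if $a,b'$ are not connected in $Q$, the only element of $\stilt(k\{a,b'\})$ above $S(a)$ in the relevant sense is $S(a)\oplus S(b')$ itself (a direct successor of the maximal element, but a direct predecessor of $S(a)$ inside the ambient poset after reduction); if there is a unique arrow $a\to b'$ or $b'\to a$, the poset is a square and the element covering $S(a)$ on the way up is $P(2)$-type or $I(1)$-type depending on orientation; if there are multiple arrows, the poset is longer. Second, I would observe that the "comparability" $X<Y$ with $X\in e(S(a))$, $Y\in e(S(b))$ forces $Q(X)_0\subsetneq Q(Y)_0$ and $\Ext^1_\Lambda(Y,X)=0$; since $X$ and $Y$ each have support of size exactly two (containing $a$ and $b$ respectively), the only way to have strict containment is $Q(X)_0=\{a,b\}=Q(Y)_0$, i.e. $X$ and $Y$ are both support tilting modules over the two-vertex subquiver on $\{a,b\}$, with $S(a)\in \add X$, $S(b)\in\add Y$, and $X<Y$. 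Third, I would read off from Example~2.9 that such a configuration — a strict inequality $X<Y$ with $S(a)\mid X$, $S(b)\mid Y$, both supported on $\{a,b\}$ — exists precisely when $\{a,b\}$ carries at least one arrow: in case (1) the only modules containing $S(a)$ (resp. $S(b)$) as a summand is $S(a)\oplus S(b)$ itself for both, so no strict inequality arises, whereas in cases (2) and (3) one genuinely finds $X=P(2)\ni S(a)$-type below $Y=I(1)\ni S(b)$-type or vice versa. One orientation of the arrow produces $X<Y$ with the roles of $a,b$ as stated; the other orientation gives $Y<X$, which is the symmetric statement, so the asymmetry in the lemma ("arrow $a\to b$") matches exactly one of the two comparabilities.

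I expect the main obstacle to be bookkeeping: verifying rigorously that the elements of $e(S(a))$ supported on a two-vertex set $\{a,b\}$ are exactly the non-maximal, non-minimal elements of $\stilt(k\{a,b\})$ containing $S(a)$ as a summand, and that being a \emph{direct} predecessor of $S(a)$ in the full poset $\stilt(\Lambda)$ (rather than just any predecessor) is consistent with this. This requires combining Theorem~2.3 (reduction at $S(a)$, so that predecessors of $S(a)$ in $\stilt(\Lambda)$ correspond to simple modules / minimal elements over the reduced algebra $C$ with $|C|=|Q_0|-1$) with the explicit two-vertex computations, and checking that the edge in the Hasse quiver joining $X$ to $Y$ (when it exists) is compatible with both being neighbours of the respective simples. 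The argument should then conclude: an arrow $a\to b$ exists $\iff$ the two-vertex subquiver on $\{a,b\}$ is of type (2) or (3) with that orientation $\iff$ there exist $X\in e(S(a))$, $Y\in e(S(b))$ with $X<Y$, where the direction of the strict inequality is pinned down by matching against the explicit Hasse diagrams in Example~2.9.
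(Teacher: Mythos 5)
Your proposal is correct and follows essentially the same route as the paper: both directions reduce to the full two-vertex subquiver on $\{a,b\}$, identify $X$ and $Y$ as tilting $k\overrightarrow{\Delta}$-modules containing $S(a)$ and $S(b)$ respectively, and then distinguish the three two-vertex cases of Example 2.9. The only real difference is that for the "wrong orientation" case the paper argues directly with $\Ext^1_\Lambda(S(b),S(a))\neq 0$ instead of reading the order off the explicit Hasse diagrams (which quietly avoids the order-compatibility bookkeeping you flag), and your phrase "strict containment $Q(X)_0\subsetneq Q(Y)_0$" should be the non-strict inclusion forced by the definition of $\leq$, after which equality of supports follows as you state.
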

\begin{proof}
Let $\overrightarrow{\Delta}$ be a full subquiver of $Q$ with $\overrightarrow{\Delta}_{0}=\{a,b\}$. Assume that there is an arrow $a\rightarrow b$ in $Q$. We denote by $P:=P_{\overrightarrow{\Delta}}$\;(resp.\;$I:=I_{\overrightarrow{\Delta}}$)
a basic projective\;(resp.\;injective) tilting $k(\overrightarrow{\Delta})$-module. We note that $I\in e(S(a))$ and $P\in e(S(b))$ with $I<P$. 

Next we assume that there are $X\in e(S(a))$ and $Y\in e(S(b))$ such that $X<Y$. Since $Y>S(a)$ and $Y\in e(S(b))$, $Y$ must be tilting $k(\overrightarrow{\Delta})$-module with $S(b)\in \add Y$. On the other hand, $Y>X\in e(S(a))$ implies that
$X$ is $k(\overrightarrow{\Delta})$-module with $S(a)\in \add X$. If $\overrightarrow{\Delta}$ is not connected, then $X=S(a)\oplus S(b)=Y$. Therefore we have $\overrightarrow{\Delta}$ is connected. Suppose that there is an arrow $a\leftarrow b$. Then we have $\Ext_{\Lambda}^{1}(S(b),S(a))\neq 0$. Since $S(a)\in \add X$, $S(b)\in \add Y$ and $X<Y$, we reach a contradiction.
Thus there is an arrow $a\rightarrow b$. 
\end{proof}
Lemma\;\ref{lemma1} shows that $S\in e(0)$ is injective if and only if for any $S^{'},X,Y\in\stilt(\Lambda)$ with $S^{'}\in e(0)$, $X\in e(S^{'})$ and $Y\in e(S)$, we have $X\not<Y$. 
In particular, the set of injective predecessors $e_{1}(0)$ of $0$ is determined by poset-structure of $\stilt(\Lambda)$. We now assume $i>0$. 
\begin{lemma}
\label{lemma2}
Let $T\in e_{2}(i)$. 
Then there are $X,Y,Z\in \stilt(\Lambda)$ such that $X\in e(T)$, $Y\in e(I_{i})$, $Z\in e(Y)$ and $X>Z$.\vspace{5pt}
\[
\unitlength 0.1in
\begin{picture}( 12.7000, 12.7000)( 24.0000,-22.3000)
\put(30.0000,-24.0000){\makebox(0,0)[lb]{$I_{i}$}}%
\put(24.0000,-17.4000){\makebox(0,0)[lb]{$T$}}%
\put(28.9000,-11.3000){\makebox(0,0)[lb]{$X>Z$}}%
\put(36.4000,-17.4000){\makebox(0,0)[lb]{$Y$}}%
%
\special{pn 8}%
\special{pa 2510 1820}%
\special{pa 2880 2190}%
\special{fp}%
\special{sh 1}%
\special{pa 2880 2190}%
\special{pa 2848 2130}%
\special{pa 2842 2152}%
\special{pa 2820 2158}%
\special{pa 2880 2190}%
\special{fp}%
%
\special{pn 8}%
\special{pa 3640 1810}%
\special{pa 3270 2200}%
\special{fp}%
\special{sh 1}%
\special{pa 3270 2200}%
\special{pa 3330 2166}%
\special{pa 3308 2162}%
\special{pa 3302 2138}%
\special{pa 3270 2200}%
\special{fp}%
%
\special{pn 8}%
\special{pa 2950 1170}%
\special{pa 2520 1550}%
\special{fp}%
\special{sh 1}%
\special{pa 2520 1550}%
\special{pa 2584 1522}%
\special{pa 2560 1516}%
\special{pa 2558 1492}%
\special{pa 2520 1550}%
\special{fp}%
%
\special{pn 8}%
\special{pa 3300 1170}%
\special{pa 3670 1540}%
\special{fp}%
\special{sh 1}%
\special{pa 3670 1540}%
\special{pa 3638 1480}%
\special{pa 3632 1502}%
\special{pa 3610 1508}%
\special{pa 3670 1540}%
\special{fp}%
\end{picture}

\end{lemma}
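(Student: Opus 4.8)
The plan is to construct the three modules $X,Y,Z$ explicitly from the combinatorial data. Write $T=I_{Q_i\cup\{b\}}$ for some $b\in Q_0\setminus Q(i+1)_0$; in particular $b$ is \emph{not} a source of $Q\setminus Q_i$, so there is an arrow $c\to b$ in $Q$ with $c\in Q_0\setminus Q_i$. Since $b\notin Q(i+1)_0$, this source-vertex $c$ of $Q\setminus Q_i$ lies in $Q(i+1)_0\setminus Q(i)_0$. First I would take $Y:=I_i\oplus I(c)\in e_1(i)\subset e(I_i)$, which makes sense because $c$ is a source of $Q\setminus Q_i$ and hence $I_{Q_i\cup\{c\}}=I_i\oplus I(c)$ is injective over $k(Q_i\cup\{c\})$. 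Next, working inside $kQ(Y)=k(Q_i\cup\{c\})$ where $Y$ is the injective tilting module, I would pick $Z\in e(Y)$ of the form $Z:=I_{Q_i\cup\{c,b'\}}$ where $b'$ is chosen so that $Z$ contains $I(b)$ as a summand over its support — concretely, since $c\to b$ is an arrow and $b\notin Q_i\cup\{c\}$, adding the vertex $b$ to the support and taking the injective tilting module over $k(Q_i\cup\{c,b\})$ lands in $e_2$ of the relevant neighbour list. Finally $X\in e(T)$ is chosen as the injective tilting module over $k(Q_i\cup\{b,c\})$ as well — i.e. $X=Z$ up to reconciling the two descriptions — so that the inequality $X\ge Z$ (in fact $X=Z$, or $X$ lies above $Z$ via a chain through mutual injective summands) is automatic.

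More carefully, the mechanism I expect is the following. Both $T=I_{Q_i\cup\{b\}}$ and $Y=I_{Q_i\cup\{c\}}$ sit above $I_i$. Passing to a predecessor of each that enlarges the support to $Q_i\cup\{b,c\}$: from $T$ one reaches $X:=I_{Q_i\cup\{b,c\}}$ (legitimate since $c$ is a source of $Q\setminus(Q_i\cup\{b\})$, using that $c$ was already a source of $Q\setminus Q_i$ and $b\ne c$), and from $Y$ one reaches $Z:=I_{Q_i\cup\{b,c\}}$ likewise. Thus $X=Z$, and in particular $X\ge Z$, with $X\in e(T)$ and $Z\in e(Y)$, $Y\in e(I_i)$, which is exactly the assertion. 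The only subtlety is to verify that each of these ``add one vertex, take the injective tilting module'' steps really is an edge in the Hasse quiver of $\stilt(\Lambda)$ going downward, but this is precisely the content of the displayed formulas for $e(I)$ recalled at the start of this subsection (applied with $Q(I)$ equal to $Q_i$, $Q_i\cup\{c\}$, $Q_i\cup\{b\}$ in turn), together with the observation that $I_{Q(I)\cup\{a\}}$ is injective over $k(Q(I)\cup\{a\})$ whenever $a$ is a source of $Q\setminus Q(I)$ — and the chosen vertices are sources of the appropriate complements by construction.

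I would then record the chain of covering relations $X=Z \lessdot Y \lessdot I_i$ and $X=Z \lessdot T$, order-theoretically: $X>Z$ should be read as ``$X\ge Z$'' with the picture indicating $X$ and $Z$ occupy the same node (the statement as displayed has $X>Z$, so if the construction forces $X=Z$ one instead produces $X$ strictly above $Z$ by going one mutation further inside $k(Q_i\cup\{b,c\})$ before descending, e.g. replacing $I(d)$ by $\tau S(d)$ for a non-sink $d$, staying in $e_3$ of that smaller quiver). The main obstacle I anticipate is not the existence of the vertex $c$ — that is immediate from $b\notin Q(i+1)_0$ — but rather checking that the two a priori different predecessors of $T$ and of $Y$ obtained by adjoining the complementary vertex are genuinely comparable in $\stilt(\Lambda)$; this requires knowing that the injective tilting module over $k(Q_i\cup\{b,c\})$ is simultaneously reachable by a single downward mutation from $I_{Q_i\cup\{b\}}$ and from $I_{Q_i\cup\{c\}}$, i.e. that ``order of adjoining $b$ and $c$ does not matter,'' which follows from the explicit description of $e_1$ and $e_2$ but should be stated cleanly as a small preliminary observation before the main argument.
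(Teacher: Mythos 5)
Your construction breaks down at the key step, namely the claim that $X:=I_{Q(i)\cup\{b,c\}}$ lies in $e(T)$. The formula $e(I)\ni I_{Q(I)\cup\{a\}}$ is only valid when adjoining $a$ does not change the injective $k Q(I)$-modules, i.e.\ when no arrow goes from $a$ into $Q(I)_{0}$ (this is automatic for $I=I_{i}$ because $Q(i)$ receives no arrows from outside, which is why $e_{2}(i)\subset e(I_{i})$ holds). But your vertex $c$ was chosen precisely so that there \emph{is} an arrow $c\to b$ into $Q(T)_{0}=Q(i)_{0}\cup\{b\}$: adjoining $c$ enlarges the injective envelope of $S(b)$, so $T=I_{Q(i)\cup\{b\}}$ is \emph{not} a direct summand of $I_{Q(i)\cup\{b,c\}}$, and the two support tilting pairs share too few summands to be Hasse-adjacent. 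Concretely, for $Q:1\to x\to b$ and $i=1$ one has $T=S(1)\oplus S(b)$, whose unique direct predecessor with full support is $S(1)\oplus S(b)\oplus P(1)$, not the injective tilting module $S(1)\oplus I(x)\oplus I(b)$. This also kills the conclusion: even if your $X$ were a predecessor, you would get $X=Z$, whereas the lemma needs the \emph{strict} inequality $X>Z$ — strictness is exactly what separates $e_{2}(i)$ from $e_{1}(i)$ via Lemma~3.4, where for $T\in e_{1}(i)$ one can perfectly well have $X=Z$. Your proposed repair (``go one mutation further'') is not available, since $X$ is constrained to be a \emph{direct} predecessor of $T$. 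A secondary, patchable error: the vertex $c$ with an arrow $c\to b$ need not be a source of $Q\setminus Q(i)$ (take $Q:1\to2\to3\to4$, $i=1$, $b=4$, $c=3$), so $Y$ need not lie in $e_{1}(i)$; but $Y:=I_{Q(i)\cup\{c\}}$ is in $e(I_{i})$ in any case, which is all the statement requires.

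The paper's proof starts the same way (choose $x\notin Q(i)_{0}$ with $x\to b$, set $Y:=I_{Q(i)\cup\{x\}}$ and $Z:=I_{Q(i)\cup\{x,b\}}\in e(Y)$), but takes $X:=T\oplus X'$ to be the unique completion of $T$ to a tilting $k(Q(i)\cup\{x,b\})$-module, which \emph{is} the element of $e(T)$ with that support. Since $Z$ is the injective, hence minimum, tilting module over that subquiver, $X\geq Z$ automatically, and the whole content of the lemma is that $X\neq Z$. The paper proves this by contradiction: if $X=Z$ then $\stilt_{I_{i}}(k(Q(i)\cup\{x,b\}))$ would contain a length-two diamond, but by Jasso reduction and the $\Tau^{-}$ symmetry this poset is isomorphic to $\stilt(k\overrightarrow{\Delta})$ for the connected $2$-point quiver $\overrightarrow{\Delta}$ on $\{x,b\}$, whose Hasse diagram is a pentagon or the infinite shape of Example~2.10, never a diamond. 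That non-equality is the step your argument is missing entirely.
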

\begin{proof}
By definition, there is a vertex $b\in Q_{0}\setminus Q(i+1)_{0}$ such that $T=I_{i}\oplus B=I_{Q(i)\cup\{b\}}$.
Then there is a vertex $x\in Q_{0}\setminus Q(i)$ such that $x\rightarrow b$. We note that $I_{Q(i)\cup\{x\}}$ is injective in 
$\mod k(Q(i)\cup\{x,b\})$. We also note that there is an indecomposable module $X^{'}\in \mod \Lambda$ such that $T\oplus X^{'}$ is a tilting $k(Q(i)\cup\{x,b\})$-module. 

We now assume that $T\oplus X^{'}=I_{Q(i)\cup\{x,b\}}$ and  
let $\overrightarrow{\Delta}$ be a full subquiver of $Q$ with $\overrightarrow{\Delta}_{0}=\{x,b\}$. 
We put $\Lambda^{'}=k(Q(i)\cup\{x,b\})$. Then the underlying graph $G(\stilt_{I_{i}}(\Lambda^{'}))$ of the Hasse quiver
 of $\stilt_{I_{i}}(\Lambda^{'})$ contains
\[
\unitlength 0.1in
\begin{picture}(  8.6000,  8.6000)( 17.7000,-12.3000)
%
\special{pn 8}%
\special{ar 2200 400 30 30  0.0000000 6.2831853}%
%
\special{pn 8}%
\special{ar 1800 800 30 30  0.0000000 6.2831853}%
%
\special{pn 8}%
\special{ar 2200 1200 30 30  0.0000000 6.2831853}%
%
\special{pn 8}%
\special{ar 2600 800 30 30  0.0000000 6.2831853}%
%
\special{pn 8}%
\special{pa 2150 450}%
\special{pa 1850 750}%
\special{fp}%
%
\special{pn 8}%
\special{pa 2550 850}%
\special{pa 2250 1150}%
\special{fp}%
%
\special{pn 8}%
\special{pa 2250 450}%
\special{pa 2550 750}%
\special{fp}%
%
\special{pn 8}%
\special{pa 1850 850}%
\special{pa 2150 1150}%
\special{fp}%
\end{picture}
 By using Lemma\;\ref{1},
we have a graph isomorphism
\[G(\stilt_{I_{i}}(\Lambda^{'})\simeq G(\Tau^{-}_{\Lambda^{'}}(\stilt_{I_{i}}(\Lambda^{'})).\]
Note that $\Tau^{-}_{\Lambda^{'}}(\stilt_{I_{i}}(\Lambda^{'}))=\{T\in \stilt(\Lambda^{'})\mid Q(T)_{0}\subset \{x,b\}\}$.
In particular we obtain a graph isomorphism \[G(\stilt_{I_{i}}(\Lambda^{'}))\simeq G(\stilt(k\overrightarrow{\Delta})).\]
Since $\overrightarrow{\Delta}$ is a 2-point connected acyclic quiver, $G(\stilt(k\overrightarrow{\Delta}))$ has one of the following form (see Example\;\ref{ex1}):\vspace{5pt}
\[
\unitlength 0.1in
\begin{picture}( 36.6000, 20.0000)(  7.7000,-26.0000)
%
\special{pn 8}%
\special{ar 1400 900 30 30  0.0000000 6.2831853}%
%
\special{pn 8}%
\special{ar 800 1300 30 30  0.0000000 6.2831853}%
%
\special{pn 8}%
\special{ar 800 1900 30 30  0.0000000 6.2831853}%
%
\special{pn 8}%
\special{ar 1400 2300 30 30  0.0000000 6.2831853}%
%
\special{pn 8}%
\special{ar 2000 1600 30 30  0.0000000 6.2831853}%
%
\special{pn 8}%
\special{pa 1300 970}%
\special{pa 900 1230}%
\special{fp}%
%
\special{pn 8}%
\special{pa 800 1430}%
\special{pa 800 1810}%
\special{fp}%
%
\special{pn 8}%
\special{pa 1500 970}%
\special{pa 1940 1530}%
\special{fp}%
%
\special{pn 8}%
\special{pa 1940 1690}%
\special{pa 1500 2230}%
\special{fp}%
%
\special{pn 8}%
\special{pa 900 1980}%
\special{pa 1300 2230}%
\special{fp}%
%
\special{pn 8}%
\special{ar 3600 2000 30 30  0.0000000 6.2831853}%
%
\special{pn 8}%
\special{ar 3600 1600 30 30  0.0000000 6.2831853}%
%
\special{pn 8}%
\special{ar 3600 1200 30 30  0.0000000 6.2831853}%
%
\special{pn 8}%
\special{ar 4000 1600 30 30  0.0000000 6.2831853}%
%
\special{pn 8}%
\special{ar 4400 2000 30 30  0.0000000 6.2831853}%
%
\special{pn 8}%
\special{ar 4400 1600 30 30  0.0000000 6.2831853}%
%
\special{pn 8}%
\special{ar 4400 1200 30 30  0.0000000 6.2831853}%
%
\special{pn 8}%
\special{sh 1}%
\special{ar 4400 1000 10 10 0  6.28318530717959E+0000}%
\special{sh 1}%
\special{ar 4400 800 10 10 0  6.28318530717959E+0000}%
\special{sh 1}%
\special{ar 4400 600 10 10 0  6.28318530717959E+0000}%
\special{sh 1}%
\special{ar 3600 2200 10 10 0  6.28318530717959E+0000}%
\special{sh 1}%
\special{ar 3600 2400 10 10 0  6.28318530717959E+0000}%
\special{sh 1}%
\special{ar 3600 2600 10 10 0  6.28318530717959E+0000}%
%
\special{pn 8}%
\special{pa 3670 1270}%
\special{pa 3930 1530}%
\special{fp}%
%
\special{pn 8}%
\special{pa 4070 1680}%
\special{pa 4330 1940}%
\special{fp}%
%
\special{pn 8}%
\special{pa 3600 1280}%
\special{pa 3600 1530}%
\special{fp}%
%
\special{pn 8}%
\special{pa 3600 1680}%
\special{pa 3600 1930}%
\special{fp}%
%
\special{pn 8}%
\special{pa 4400 1680}%
\special{pa 4400 1930}%
\special{fp}%
%
\special{pn 8}%
\special{pa 4400 1280}%
\special{pa 4400 1530}%
\special{fp}%
\put(26.8000,-16.8000){\makebox(0,0)[lb]{$\mathrm{or}$}}%
\end{picture}
Hence we reach a contradiction.
 
Therefore $X:=T\oplus X^{'}$, $Y:=I_{Q(i)\cup\{x\}}$ and $Z=I_{Q(i)\cup\{x,b\}}$ satisfy
desired property.   
\end{proof}
\begin{lemma}
\label{lemma3}
Let $T\in e_{1}(i)$. Then for any $X,Y,Z\in\stilt(\Lambda) $ with $X\in e(T)$, $Y\in e(I_{i})$ and $Z\in e(Y)$, we have $X\not> Z$.  
\end{lemma}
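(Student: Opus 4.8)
Since $T\in e_1(i)$, write $T=I_i\oplus I(a)$ with $a$ a source of $Q\setminus Q(i)$, and put $Q':=Q(i)\cup\{a\}$. Because $Q'_0$ is closed under predecessors in $Q$, the summand $I(a)=I_{kQ'}(a)$ coincides with the $\Lambda$-injective $I_\Lambda(a)$, so $T=I_{Q'}$ is itself $\Lambda$-injective; in particular $T$ is an injective support tilting module (with $Q(T)=Q'$) and $\Ext^1_\Lambda(-,T)=0$. Suppose, for a contradiction, that there exist $X\in e(T)$, $Y\in e(I_i)$, $Z\in e(Y)$ with $X>Z$.

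The plan is to pin down the five modules $I_i,T,Y,Z,X$ using the descriptions of $e(\cdot)$ and $s(\cdot)$ recalled in \S3.1 — which apply to the injective support tilting modules $I_i$, $T$, and, once its shape is known, $Y$ — together with the elementary fact that along every cover $M\lessdot M'$ of $\stilt(\Lambda)$ one has $Q(M)_0\subseteq Q(M')_0$ with at most one new vertex. Reading this off along the saturated chains $I_i\lessdot T\lessdot X$ and $I_i\lessdot Y\lessdot Z$, and combining with $Q(Z)_0\subseteq Q(X)_0$ (which holds because $X>Z$), forces $Q(i)_0\subseteq Q(Y)_0\subseteq Q(Z)_0\subseteq Q(X)_0$ with $\lvert Q(X)_0\setminus Q(i)_0\rvert\le 2$ and $a\in Q(X)_0$. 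Hence $Q(Y)_0$ is one of $Q(i)_0$, $Q(i)_0\cup\{a\}$, or $Q(i)_0\cup\{v\}$ for a single $v\notin Q'_0$, and accordingly $X$ is either $I_{Q'\cup\{v\}}$ (the injective support tilting module on $Q'_0\cup\{v\}$) or of exchange shape $T/I(c)\oplus\tau_{Q'}S(c)$.

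Next I would treat the configurations one at a time. If $Q(Y)_0=Q(i)_0\cup\{a\}$, then (as $a$ is a source of $Q\setminus Q(i)$) $Y\in e_1(i)$ gives $Y=I_i\oplus I(a)=T$, so $Z,X\in e(T)$ with $T<Z\le X$ while $X$ covers $T$ — impossible. If $Q(Y)_0=Q(i)_0\cup\{v\}$, then $Y=I_{Q(i)\cup\{v\}}$ and $X=I_{Q'\cup\{v\}}$; if $Z$ is a support-enlarging neighbour of $Y$, then $Q(Z)_0=Q(i)_0\cup\{v,a\}$ and $Z=I_{Q(Y)\cup\{a\}}=I_{Q'\cup\{v\}}=X$, contradicting $X>Z$. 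This leaves only the configurations in which $Z$ — and, when $Q(Y)_0=Q(i)_0$, also $Y$ — has exchange shape $W/I(c)\oplus\tau_{Q(W)}S(c)$; there $I_i$ is not a summand of $Z$, and I would finish by a direct extension computation. In those cases $X$ is either $I_{Q'\cup\{v\}}$ or an exchange neighbour $T/I(c')\oplus\tau_{Q'}S(c')$ of $T$, all of whose indecomposable summands but one are $\Lambda$-injectives $I_\Lambda(u)$ with $u\in Q'_0$, and a projective resolution over $\Lambda$ of the relevant summand $I_\Lambda(c)$ gives $\Ext^1_\Lambda(X,Z)\supseteq\Ext^1_\Lambda\bigl(I_\Lambda(c),\tau_{Q(W)}S(c)\bigr)\neq 0$, contradicting $X>Z$.

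For the $I_i$-containing cases (everything except these exchange-shape ones) there is also a cleaner, structural route: apply the reduction $\stilt_{I_i}(\Lambda)\simeq\stilt(C)$ with $C$ hereditary from \S2.3. Under it $I_i\mapsto 0$, $T\mapsto S_C(\bar a)$, and $\bar a$ is a \emph{source} of the quiver of $C$ — because $a$ is a source of $Q\setminus Q(i)$ while $Q(i)_0$ is closed under predecessors in $Q$ — so $S_C(\bar a)=I_C(\bar a)$ is injective over $C$. Since $X$ covers $S_C(\bar a)$ and has support of size $2$ in $\stilt(C)$, it must be the injective tilting module of the corresponding two-vertex full subquiver, hence the \emph{minimum} of that tilting poset (Example~\ref{ex1}); this is incompatible with $X>Z$ for another tilting module $Z$ of the same support. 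I expect the real obstacle to lie precisely in the exchange-shape cases, where no such reduction is available and the non-split extension in $\Ext^1_\Lambda(X,Z)$ must be exhibited by hand — that is where the interplay of $\tau$, of the $\Lambda$-injectives, and of the shape of $Q$ genuinely has to be used; a secondary point requiring care is the passage between $\Ext$ over $\Lambda$ (resp. $C$) and over the possibly non-convex subquivers that occur.
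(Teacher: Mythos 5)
Your overall architecture is sound and in fact mirrors the paper's: you split according to $Q(Y)_0\in\{Q(i)_0,\ Q(i)_0\cup\{a\},\ Q(i)_0\cup\{v\}\}$, which is exactly the paper's trichotomy $Y\in e_3(i)$, $Y\in e_1(i)$ with $a'=a$, $Y\in e_1(i)\sqcup e_2(i)$ otherwise, and your two ``all-injective'' configurations ($Y=T$, and $Z$ support-enlarging, giving $Z=X$) are handled correctly. The gap is precisely where you predicted it, and it is genuine: the exchange-shape cases are not closed, and the tool you propose for them does not apply. First, when $Q(Y)_0=Q(i)_0$ the module $Y=I_i/I(c)\oplus\tau_{Q(i)}S(c)$ is \emph{not} injective, so the descriptions of $e(\cdot)$ from \S 3.1 give you no control over $Z\in e(Y)$; your taxonomy of the shapes of $Z$ is therefore incomplete. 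The paper pins $Z$ down by a mechanism you never invoke, namely maximal rigidity: $X>Z$ forces $\Ext^1_\Lambda(I,Z)=0=\Ext^1_\Lambda(Z,I)$ for every injective direct summand $I$ of $X$, hence $I\in\add Z$. This is how one excludes $Q(Z)=Q(i)$ (some $I(z)$ would have to be both in and not in $\add Z$) and concludes $Z=Y\oplus I(a)$.

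Second, in that same configuration the paper shows $X$ is forced to be the exchange $T/I(c)\oplus\tau_{Q(i)\cup\{a\}}S(c)$, so $I_\Lambda(c)$ is \emph{not} a direct summand of $X$, and your claimed inclusion $\Ext^1_\Lambda(X,Z)\supseteq\Ext^1_\Lambda\bigl(I_\Lambda(c),\tau_{Q(W)}S(c)\bigr)$ rests on a false premise. The contradiction the paper actually reaches there is combinatorial, not homological: $T=(I_i/I(c))\oplus I(c)\oplus I(a)$, $Z=(I_i/I(c))\oplus\tau_{Q(i)}S(c)\oplus I(a)$ and $X=(I_i/I(c))\oplus\tau_{Q(i)\cup\{a\}}S(c)\oplus I(a)$ would be three pairwise distinct completions of the almost complete tilting $k(Q(i)\cup\{a\})$-module $(I_i/I(c))\oplus I(a)$, contradicting the two-complements theorem. (Your Ext argument can be made to work in the sub-case $Q(Y)_0=Q(i)_0\cup\{v\}$ with exchanged vertex $c\in Q(i)_0$, where $I_\Lambda(c)$ really is a summand of $X=I_{Q'\cup\{v\}}$ and the AR formula gives $\Ext^1(I(c),\tau S(c))\neq 0$; but it breaks when the exchanged vertex is $v$ itself, since $I_{kQ(Y)}(v)$, $I_{kQ(X)}(v)$ and $I_\Lambda(v)$ then differ. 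The paper avoids this by the ``$I_i\in\add Z$ versus $I_i\notin\add Z$'' clash instead.) The alternative route via $\stilt_{I_i}(\Lambda)\simeq\stilt(C)$ is likewise only asserted — identifying the quiver of $C$ and the images of $T$ and $X$ under the reduction is nontrivial — and in any case it cannot reach the configurations where $I_i\notin\add Z$, which are the ones that matter.
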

\begin{proof}
Suppose that there are $X,Y,Z\in \stilt(\Lambda)$ such that
\[
\unitlength 0.1in
\begin{picture}( 12.7000, 12.7000)( 24.0000,-22.3000)
\put(30.0000,-24.0000){\makebox(0,0)[lb]{$I_{i}$}}%
\put(24.0000,-17.4000){\makebox(0,0)[lb]{$T$}}%
\put(28.9000,-11.3000){\makebox(0,0)[lb]{$X>Z$}}%
\put(36.4000,-17.4000){\makebox(0,0)[lb]{$Y$}}%
%
\special{pn 8}%
\special{pa 2510 1820}%
\special{pa 2880 2190}%
\special{fp}%
\special{sh 1}%
\special{pa 2880 2190}%
\special{pa 2848 2130}%
\special{pa 2842 2152}%
\special{pa 2820 2158}%
\special{pa 2880 2190}%
\special{fp}%
%
\special{pn 8}%
\special{pa 3640 1810}%
\special{pa 3270 2200}%
\special{fp}%
\special{sh 1}%
\special{pa 3270 2200}%
\special{pa 3330 2166}%
\special{pa 3308 2162}%
\special{pa 3302 2138}%
\special{pa 3270 2200}%
\special{fp}%
%
\special{pn 8}%
\special{pa 2950 1170}%
\special{pa 2520 1550}%
\special{fp}%
\special{sh 1}%
\special{pa 2520 1550}%
\special{pa 2584 1522}%
\special{pa 2560 1516}%
\special{pa 2558 1492}%
\special{pa 2520 1550}%
\special{fp}%
%
\special{pn 8}%
\special{pa 3300 1170}%
\special{pa 3670 1540}%
\special{fp}%
\special{sh 1}%
\special{pa 3670 1540}%
\special{pa 3638 1480}%
\special{pa 3632 1502}%
\special{pa 3610 1508}%
\special{pa 3670 1540}%
\special{fp}%
\end{picture}

Let $a\in Q(i+1)_{0}\setminus Q(i)_{0}$ with $T=I_{i}\oplus I(a)$.

First we assume that $Y\in e_{3}(i)$. Then there is a vertex $c\in Q(i)_{0}$ which is not sink of $Q(i)$ such that \[Y=I_{i}/I(c)\oplus \tau_{Q(i)}S(c).\] In this case, $\tau_{Q(i)}S(c)\in \add Z$. Hence we have \[X=(I_{i}/I(c))\oplus I(a)\oplus \tau_{Q(i)\cup\{a\}}S(c).\] If $Q(Z)=Q(Y)=Q(i)$, then there is a vertex $z\in Q(i)_{0}\setminus\{c\}$
such that $I(z)\not\in \add Z$. On the other hand, $I(z)$ must be a direct summand of $X$. Since $X>Z$, we have \[\Ext_{\Lambda}^{1}(I(z),Z)=0=\Ext_{\Lambda}^{1}(Z,I(z)).\] Hence we obtain $I(Z)\in \add Z$. This is a contradiction. Thus we can assume that $Q(Z)=Q(Y)\cup\{z\}$ for some $z\in Q_{0}$. Then there is an indecomposable module $Z^{'}\in \mod \Lambda$ such that $Z=Y\oplus Z^{'}$. $X>Z$ implies \[Q(i)\cup\{z\}=Q(Z)\subset Q(X)=Q(i)\cup\{a\}.\]Therefore we obtain $z=a$. $X>Z$ also implies that \[\Ext_{\Lambda}^{1}(I(a),Z)=0=\Ext_{\Lambda}^{1}(Z,I(a)).\]
 We conclude that $I(a)\in \add Z$. In particular, we have that
\[Z=Y\oplus I(a).\] Now $T,X,Z\in \tilt (k(Q(i)\cup\{a\}))$ are complements of almost complete partial tilting $k(Q(i)\cup\{a\})$-module $(I_{i}/I(c))\oplus I(a)$. This is a contradiction.

Next we assume that $Y\in e_{2}(i)$. Then there is a vertex $b\in Q_{0}\setminus Q(i+1)_{0}$ such that $Q(Y)=Q(i)\cup\{b\}$. $X>Y$ implies that $Q(Y)\subset Q(X)$. Thus we obtain \[Q(X)=Q(i)\cup \{a,b\}=Q(T)\cup\{b\}\ \mathrm{and}\ X=I_{Q(i)\cup \{a,b\}}.\] If $Q(X)=Q(Z)$, then we have $X\leq Z$. This is a contradiction. Therefore we can suppose that $Q(Z)=Q(Y)$. Let $B$ be an indecomposable $\Lambda$-module such that $Y=I_{i}\oplus B$. Then $B$ must be a direct summand of $Z$. Therefore we have that $I_{i}\not\in \add Z$. On the other hand, $X>Z$ implies that $I_{i}\in \add Z$. We reach a contradiction.

Therefore $Y$ must be an element of $e_{1}(i)$. Then there is a vertex $a^{'}\in Q(i+1)_{0}\setminus Q(i)_{0}$ such that $Y=I_{i}\oplus I(a^{'})$. In this case, $X>Y$ implies that \[X=I_{i}\oplus I(a)\oplus I(a^{'}).\] This implies that $Y\in s(X)$. This contradict to $Y<Z<X$.     
\end{proof}
Let $T\in e(I_{i})$. For any $r\in Z_{\geq 1}$, we set \[\mathcal{F}(i,T,r):=\{((X_{k})_{k\in\{1,\cdots,r \}},(T_{k})_{k\in\{1\cdots ,r-1\}},(Y_{k})_{k\in\{1,\cdots, r-1\}})\mid (\star) \}\]
where $(\star)$ is the following:\\
\[(\star)\ \left\{\begin{array}{lll}
\bullet & X_{1}\in s(I_{i}),\;X_{k+1}\in s(X_{k})\\
\bullet & T_{k}\in e(X_{k})\setminus\{X_{k-1}\}\\
\bullet & Y_{k}\in e(T_{k})\\
\bullet & Y_{1}\geq T,\;Y_{k+1}\geq T_{k}\\
\end{array}\right.\]

We let $\mathcal{F}(i,T):=\bigsqcup_{r\geq 1}\mathcal{F}(i,T,r)$.\vspace{5pt}
\[
\unitlength 0.1in
\begin{picture}( 15.4500, 43.3000)( 18.7000,-47.5000)
\put(22.0000,-6.0000){\makebox(0,0)[lb]{$Y_{1}\geq T$}}%
%
\special{pn 8}%
\special{pa 2800 500}%
\special{pa 3200 500}%
\special{fp}%
\special{sh 1}%
\special{pa 3200 500}%
\special{pa 3134 480}%
\special{pa 3148 500}%
\special{pa 3134 520}%
\special{pa 3200 500}%
\special{fp}%
\put(33.1000,-5.9000){\makebox(0,0)[lb]{$I_{i}$}}%
%
\special{pn 8}%
\special{pa 3400 650}%
\special{pa 3400 1050}%
\special{fp}%
\special{sh 1}%
\special{pa 3400 1050}%
\special{pa 3420 984}%
\special{pa 3400 998}%
\special{pa 3380 984}%
\special{pa 3400 1050}%
\special{fp}%
\put(22.0000,-13.5000){\makebox(0,0)[lb]{$Y_{2}\geq T_{1}$}}%
%
\special{pn 8}%
\special{pa 2800 1250}%
\special{pa 3200 1250}%
\special{fp}%
\special{sh 1}%
\special{pa 3200 1250}%
\special{pa 3134 1230}%
\special{pa 3148 1250}%
\special{pa 3134 1270}%
\special{pa 3200 1250}%
\special{fp}%
\put(33.1000,-13.4000){\makebox(0,0)[lb]{$X_{1}$}}%
%
\special{pn 8}%
\special{pa 3400 1400}%
\special{pa 3400 1800}%
\special{fp}%
\special{sh 1}%
\special{pa 3400 1800}%
\special{pa 3420 1734}%
\special{pa 3400 1748}%
\special{pa 3380 1734}%
\special{pa 3400 1800}%
\special{fp}%
\put(22.0000,-20.9000){\makebox(0,0)[lb]{$Y_{3}\geq T_{2}$}}%
%
\special{pn 8}%
\special{pa 2800 1990}%
\special{pa 3200 1990}%
\special{fp}%
\special{sh 1}%
\special{pa 3200 1990}%
\special{pa 3134 1970}%
\special{pa 3148 1990}%
\special{pa 3134 2010}%
\special{pa 3200 1990}%
\special{fp}%
\put(33.1000,-20.8000){\makebox(0,0)[lb]{$X_{2}$}}%
%
\special{pn 8}%
\special{pa 3400 2140}%
\special{pa 3400 2540}%
\special{fp}%
\special{sh 1}%
\special{pa 3400 2540}%
\special{pa 3420 2474}%
\special{pa 3400 2488}%
\special{pa 3380 2474}%
\special{pa 3400 2540}%
\special{fp}%
%
\special{pn 8}%
\special{sh 1}%
\special{ar 3400 2770 10 10 0  6.28318530717959E+0000}%
\special{sh 1}%
\special{ar 3400 2850 10 10 0  6.28318530717959E+0000}%
\special{sh 1}%
\special{ar 3400 2940 10 10 0  6.28318530717959E+0000}%
%
\special{pn 8}%
\special{pa 2800 4060}%
\special{pa 3200 4060}%
\special{fp}%
\special{sh 1}%
\special{pa 3200 4060}%
\special{pa 3134 4040}%
\special{pa 3148 4060}%
\special{pa 3134 4080}%
\special{pa 3200 4060}%
\special{fp}%
\put(33.2000,-41.5000){\makebox(0,0)[lb]{$X_{r-1}$}}%
%
\special{pn 8}%
\special{pa 3410 4240}%
\special{pa 3410 4640}%
\special{fp}%
\special{sh 1}%
\special{pa 3410 4640}%
\special{pa 3430 4574}%
\special{pa 3410 4588}%
\special{pa 3390 4574}%
\special{pa 3410 4640}%
\special{fp}%
\put(33.3000,-49.2000){\makebox(0,0)[lb]{$X_{r}$}}%
%
\special{pn 8}%
\special{pa 2300 650}%
\special{pa 2550 1070}%
\special{fp}%
\special{sh 1}%
\special{pa 2550 1070}%
\special{pa 2534 1002}%
\special{pa 2524 1024}%
\special{pa 2500 1024}%
\special{pa 2550 1070}%
\special{fp}%
%
\special{pn 8}%
\special{pa 2300 1400}%
\special{pa 2550 1820}%
\special{fp}%
\special{sh 1}%
\special{pa 2550 1820}%
\special{pa 2534 1752}%
\special{pa 2524 1774}%
\special{pa 2500 1774}%
\special{pa 2550 1820}%
\special{fp}%
\put(24.6000,-41.9000){\makebox(0,0)[lb]{$T_{r-1}$}}%
%
\special{pn 8}%
\special{pa 2300 2130}%
\special{pa 2550 2550}%
\special{fp}%
\special{sh 1}%
\special{pa 2550 2550}%
\special{pa 2534 2482}%
\special{pa 2524 2504}%
\special{pa 2500 2504}%
\special{pa 2550 2550}%
\special{fp}%
\put(18.7000,-33.9000){\makebox(0,0)[lb]{$Y_{r-1}\geq T_{r-2}$}}%
%
\special{pn 8}%
\special{pa 2800 3300}%
\special{pa 3200 3300}%
\special{fp}%
\special{sh 1}%
\special{pa 3200 3300}%
\special{pa 3134 3280}%
\special{pa 3148 3300}%
\special{pa 3134 3320}%
\special{pa 3200 3300}%
\special{fp}%
\put(33.1000,-33.9000){\makebox(0,0)[lb]{$X_{r-2}$}}%
%
\special{pn 8}%
\special{pa 3400 3450}%
\special{pa 3400 3850}%
\special{fp}%
\special{sh 1}%
\special{pa 3400 3850}%
\special{pa 3420 3784}%
\special{pa 3400 3798}%
\special{pa 3380 3784}%
\special{pa 3400 3850}%
\special{fp}%
%
\special{pn 8}%
\special{sh 1}%
\special{ar 2480 2760 10 10 0  6.28318530717959E+0000}%
\special{sh 1}%
\special{ar 2480 2840 10 10 0  6.28318530717959E+0000}%
\special{sh 1}%
\special{ar 2480 2930 10 10 0  6.28318530717959E+0000}%
%
\special{pn 8}%
\special{pa 2020 3450}%
\special{pa 2380 3900}%
\special{fp}%
\special{sh 1}%
\special{pa 2380 3900}%
\special{pa 2354 3836}%
\special{pa 2348 3858}%
\special{pa 2324 3860}%
\special{pa 2380 3900}%
\special{fp}%
\end{picture}

\begin{lemma}
\label{lemma4}
Let $T\in e_{3}(i)$. Then there exists $((X_{k}),(T_{k}),(Y_{k}))\in \mathcal{F}(i,T,r)$ such that for any $T_{r}$, $Y_{r}$ satisfying
$T_{r}\in e(X_{r})\setminus\{X_{r-1}\}$ and $Y_{r}\in e(T_{r})$, we have $ Y_{r}\not\geq T_{r-1}$
\end{lemma}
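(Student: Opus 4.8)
\emph{The plan.}
The idea is to track the ``shape'' of $T=I_i/I(c)\oplus\tau_{Q(i)}S(c)$ (where $c\in Q(i)_0$ is not a sink of $Q(i)$) as one moves downwards from $I_i$ along a chain of successors obtained by deleting sinks of the current support quiver one at a time, and to stop at the first moment when $c$ has become a sink. At that point $S(c)$ is projective over the remaining algebra, the ``$\tau_{Q(i)}S(c)$'' that encoded $T$ degenerates, and one checks by an $\Ext$-computation in $kQ$ that no admissible pair $(T_r,Y_r)$ can certify a further step.

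\emph{Building the chain.}
Set $Q^{(0)}:=Q(i)$. Since $c$ is not a sink of $Q(i)$, the full subquiver on $\{v:c\rightsquigarrow v\text{ in }Q(i)\}$ is nonempty and acyclic, hence has a sink, and such a vertex is a sink of $Q(i)$ as well; I would delete such sinks successively, $Q^{(k)}:=Q^{(k-1)}\setminus\{d_k\}$, keeping $c$ a vertex throughout, until $c$ itself becomes a sink, say at step $r\ge 1$; then $c$ is not a sink of $Q^{(k)}$ for $k<r$, and $d_r$ is the unique out-neighbour of $c$ in $Q^{(r-1)}$. Put $X_k:=I_{Q^{(k)}}$ for $0\le k\le r$. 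Since each $d_{k+1}$ is a sink of $Q^{(k)}$ one has $I_{Q^{(k)}}/I(d_{k+1})=I_{Q^{(k+1)}}$, so $X_{k+1}\in s(X_k)$ and $X_1,\dots,X_r$ is a descending path of successors below $X_0=I_i$. For $0\le k\le r-1$ the vertex $c$ is still a non-sink of $Q^{(k)}$, so $T_k:=X_k/I(c)\oplus\tau_{Q^{(k)}}S(c)$ is an $e_3$-type predecessor of $X_k$; as $X_{k-1}$ has strictly larger support it is not of this form, whence $T_k\in e(X_k)\setminus\{X_{k-1}\}$, with $T_0=T$. For $1\le k\le r-1$ I would take $Y_k$ to be the unique tilting $kQ^{(k-1)}$-module properly containing $T_k$; such a module exists and is unique because, by the reduction theorem of \cite{J} applied to the partial tilting module $T_k$ over $kQ^{(k-1)}$, one has $\stilt_{T_k}(kQ^{(k-1)})\simeq\stilt(C)$ with $\lvert C\rvert=1$. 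Then $Y_k\in e(T_k)$.

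\emph{Showing the chain lies in $\mathcal F(i,T,r)$.}
It remains to verify $Y_k\ge T_{k-1}$ for $1\le k\le r-1$. As $Y_k$ and $T_{k-1}$ have the same support $Q^{(k-1)}_0$, this is equivalent to $\Ext^1_{kQ^{(k-1)}}(Y_k,T_{k-1})=0$. Unwinding the summands of $T_{k-1}=\bigl(I_{Q^{(k-1)}}/I(c)\bigr)\oplus\tau_{Q^{(k-1)}}S(c)$ and of $Y_k$, all but one pairing vanish formally: the summands $I_{Q^{(k-1)}}(a)$ are injective, and the rest of $T_{k-1}$ together with $\tau_{Q^{(k-1)}}S(c)$ lies in the rigid module $T_{k-1}$, which shares the summands of $T_k\subseteq Y_k$. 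The remaining pairing is $\Ext^1_{kQ^{(k-1)}}\!\bigl(\tau_{Q^{(k)}}S(c),\tau_{Q^{(k-1)}}S(c)\bigr)$; using $\tau^{-}_{kQ^{(k-1)}}\tau_{kQ^{(k-1)}}S(c)=S(c)$ (valid since $c$ is not a sink of $Q^{(k-1)}$) and Auslander--Reiten duality, it equals $D\,\sHom_{kQ^{(k-1)}}\!\bigl(S(c),\tau_{Q^{(k)}}S(c)\bigr)$, and in fact $\Hom_{kR}(S(c),\tau_RS(c))=0$ for every acyclic $R$: a nonzero such map would dualize to a nonzero map $\mathrm{Tr}_RS(c)\to S^{\mathrm{op}}(c)$, which is impossible since $\mathrm{Tr}_RS(c)$ is a quotient of $\bigoplus_{c\to c_j}P^{\mathrm{op}}(c_j)$ and $Q$ has no loop at $c$. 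Hence $Y_k\ge T_{k-1}$, and $((X_k),(T_k),(Y_k))\in\mathcal F(i,T,r)$.

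\emph{Non-extendability (the main obstacle).}
Finally I must show that for every $T_r\in e(X_r)\setminus\{X_{r-1}\}$ and every $Y_r\in e(T_r)$ one has $Y_r\not\ge T_{r-1}$. Since $c$ is a sink of $Q^{(r)}$, $X_r$ has no $e_3$-type predecessor ``at $c$''. A support comparison shows $Y_r\ge T_{r-1}$ forces $d_r\in Q(Y_r)_0$; as passing from $X_r$ to $T_r$ and from $T_r$ to $Y_r$ each enlarges the support by at most one vertex, this leaves only the possibilities $T_r=I_{Q^{(r)}\cup\{v\}}$ with $Y_r=I_{Q^{(r-1)}\cup\{v\}}$ (for $v\ne d_r$) and the analogous $e_3$-type configurations, and in each of these I expect a direct computation to give $\Ext^1_{kQ}(Y_r,T_{r-1})\ne 0$: the arrow(s) $c\to d_r$ of $Q^{(r-1)}$ produce a nonzero extension of a constituent of $T_{r-1}$ (coming from $\tau_{Q^{(r-1)}}S(c)$ and $I_{Q^{(r-1)}}(d_r)$) by $S(c)$, which $T_r$ cannot absorb because it is supported away from $c$ or is built at a vertex $c'\ne c$. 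The purely combinatorial input (well-definedness of $r$, the successor chain, and $T_k\in e(X_k)\setminus\{X_{k-1}\}$) and the legitimacy check above are routine; the hard part is this last step, where one has to rule out \emph{every} predecessor $T_r$ of $X_r$ and \emph{every} predecessor $Y_r$ of $T_r$, producing an honest nonzero $\Ext^1$ in $kQ$ in each case.
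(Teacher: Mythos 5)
Your chain is the paper's chain: deleting the proper successors of $c$ sink by sink is exactly the paper's sequence $b_1,\dots,b_r$ (and your stopping time, the first moment $c$ becomes a sink, coincides with ``all successors deleted''), and your $X_k,T_k,Y_k$ agree with the paper's choices. But your verification that the chain lies in $\mathcal{F}(i,T,r)$ has a hole: writing $Y_k=T_k\oplus Z$, your ``all but one pairing vanish formally'' never treats the pairing $\Ext^1_{kQ^{(k-1)}}(Z,\tau_{Q^{(k-1)}}S(c))$, because you never identify the complement $Z$, and the justification offered --- that the rigid module $T_{k-1}$ ``shares the summands of $T_k$'' --- is false in general ($\tau_{Q^{(k)}}S(c)$ is a summand of $T_k$ but not of $T_{k-1}$). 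The paper sidesteps this computation entirely: by Jasso's reduction, $\stilt_{X_{k+1}/I(c)}(kQ(i,k))$ is the support-tilting poset of a $2$-point quiver containing the square $X_{k+1}<T_{k+1},X_k$ and $T_k,Y_{k+1}$, hence of a \emph{connected} $2$-point quiver with $X_{k+1}$ as minimum; there $Y_{k+1}$ and $T_k$ must be comparable, and $Y_{k+1}<T_k$ would insert $Y_{k+1}$ strictly between $X_k$ and its cover $T_k$, a contradiction.

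The decisive gap is the last step. The non-extendability claim --- that \emph{no} pair $(T_r,Y_r)$ with $T_r\in e(X_r)\setminus\{X_{r-1}\}$, $Y_r\in e(T_r)$ satisfies $Y_r\geq T_{r-1}$ --- is the actual content of the lemma, and you do not prove it: you sketch an incomplete enumeration of candidates and state that you ``expect a direct computation to give $\Ext^1_{kQ}(Y_r,T_{r-1})\neq 0$'' in each case. That is a plan, not an argument, and the case list as written omits, e.g., the $e_3$-type predecessors $X_r/I(c')\oplus\tau S(c')$ at the various non-sinks $c'$ of $Q^{(r)}$ together with \emph{all} of their predecessors $Y_r$. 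The paper closes this step with one uniform argument requiring no $\Ext$ computation in $kQ$: since $c$ is a sink of $Q(X_r)$, every admissible $T_r$ retains $I(c)$ as a direct summand; if $Y_r\geq T_{r-1}$ then $d_r\in Q(T_{r-1})_0\subset Q(Y_r)_0$ while $d_r\notin Q(T_r)_0$ (the case $Q(T_r)_0=Q^{(r)}_0\cup\{d_r\}$ being excluded because it gives $T_r=X_{r-1}$), so $Q(Y_r)\supsetneq Q(T_r)$ forces $T_r\in\add Y_r$, hence $I(c)\in\add Y_r$; and an injective direct summand of $Y_r$ must then lie in $\add T_{r-1}$, contradicting the fact that $T_{r-1}$ was formed precisely by removing $I(c)$. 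You should replace your ``expected computation'' by this argument (or an equivalent one); as it stands the lemma is not proved.
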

\begin{proof}
Let $T=T_{i}/I(a)\oplus \tau_{Q(i)}S(a)$. We denote by $\overrightarrow{\Delta}(i,a)$ the full subquiver of $Q$ with
\[\overrightarrow{\Delta}(i,a)_{0}:=\{x\in Q(i)_{0}\mid x\ \mathrm{is\ a\ successor\ of}\ a\}.\] We define $b_{k}\in Q(i)_{0}\;(k=1,2,\cdots,r:=\#\overrightarrow{\Delta}(i,a)_{0})$ as follows:\\
$\begin{array}{l}
(1)\;b_{1}\mathrm{\ be\ a\ sink\ of}\ \overrightarrow{\Delta}(i,a).\\
(2)\;b_{k}\mathrm{\ be\ a\ sink\ of}\ \overrightarrow{\Delta}(i,a)\setminus\{b_{1},\cdots,b_{k-1}\}.  
\end{array}$\vspace{5pt}

We set \[X_{k}:=I_{i}/\bigoplus_{j\leq k}I(b_{j}).\] We note that $X_{1}$ is a direct successor of $I_{i}$ and $X_{k+1}$ is a direct successor of $X_{k}$. For any $k\in \{1,2,\cdots, r-1\}$, we put \[Q(i,k):=Q(i)\setminus\{b_{1},\cdots,b_{k}\}\] and define $T_{k}$ and $Y_{k}$ as follows:\\
$\begin{array}{l}
\bullet\;T_{k}:=X_{k}/I_{a}\oplus\tau_{Q(i,k)}S(a).\\
\bullet\;Y_{k}\mathrm{\ is\ a\ unique\ direct\ predecessor\ of}\ T_{k}\ \mathrm{with}\ Q(Y)=Q(i,k-1).\\ 
\end{array}$\vspace{5pt}

We claim that $((X_{k}),(T_{k}),(Y_{k}))\in \mathcal{F}(i,T,r)$. If $r=1$, then the assertion is obvious. Hence we assume $r\geq 2$. Since $T_{k}\in e(X_{k})\setminus\{X_{k-1}\}, Y_{k}\in e(T_{k})$, it is sufficient to show that  
\[Y_{k+1}\geq T_{k}\;(k=0,1,\cdots,r-2),\]
where we put $T_{0}=T$. 
Without loss of generality, we can assume that $Y_{k+1}\not\simeq T_{k}$. Then poset $\stilt_{X_{k+1}/I(a)}(kQ(i,k))$ contains\vspace{5pt}
\[
\unitlength 0.1in
\begin{picture}( 23.4500, 16.9000)( 20.0000,-25.2000)
\put(30.4000,-26.9000){\makebox(0,0)[lb]{$X_{k+1}$}}%
\put(20.0000,-19.7000){\makebox(0,0)[lb]{$T_{k+1}$}}%
\put(20.0000,-10.0000){\makebox(0,0)[lb]{$Y_{k+1}$}}%
\put(42.7000,-10.0000){\makebox(0,0)[lb]{$T_{k}$}}%
\put(42.6000,-19.7000){\makebox(0,0)[lb]{$X_{k}$}}%
%
\special{pn 8}%
\special{pa 2140 1110}%
\special{pa 2140 1710}%
\special{fp}%
\special{sh 1}%
\special{pa 2140 1710}%
\special{pa 2160 1644}%
\special{pa 2140 1658}%
\special{pa 2120 1644}%
\special{pa 2140 1710}%
\special{fp}%
%
\special{pn 8}%
\special{pa 4340 1110}%
\special{pa 4340 1710}%
\special{fp}%
\special{sh 1}%
\special{pa 4340 1710}%
\special{pa 4360 1644}%
\special{pa 4340 1658}%
\special{pa 4320 1644}%
\special{pa 4340 1710}%
\special{fp}%
%
\special{pn 8}%
\special{pa 2180 2080}%
\special{pa 2980 2520}%
\special{fp}%
\special{sh 1}%
\special{pa 2980 2520}%
\special{pa 2932 2470}%
\special{pa 2934 2494}%
\special{pa 2912 2506}%
\special{pa 2980 2520}%
\special{fp}%
%
\special{pn 8}%
\special{pa 4270 2080}%
\special{pa 3470 2520}%
\special{fp}%
\special{sh 1}%
\special{pa 3470 2520}%
\special{pa 3538 2506}%
\special{pa 3518 2494}%
\special{pa 3520 2470}%
\special{pa 3470 2520}%
\special{fp}%
\end{picture}

On the other hand, there is a 2-point acyclic quiver $\overrightarrow{\Delta}$ such that \[\stilt_{X_{k+1}/I(a)}(kQ(i,k))\simeq \stilt(k\overrightarrow{\Delta}).\]
If $\overrightarrow{\Delta}$ is not connected, then $\#\stilt(k\overrightarrow{\Delta})=4$ and this is a contradiction. Therefore we have that $\overrightarrow{\Delta}$ is connected. Since
$X_{k+1}$ is a minimum element of $\stilt_{X_{k+1}/I(a)}(kQ(i,k))$,
either $Y_{i+1}\geq T_{k}$ or $Y_{k+1}<T_{k}$ hold\;(see Example\;\ref{ex1}). Since $Q(Y_{k+1})=Q(i,k)=Q(X_{k})$ and $X_{k}=I_{Q(i,k)}$, we have $Y_{k+1}\geq X_{k}$. If  $Y_{k+1}<T_{k}$, 
then we conclude $X_{k}<Y_{k+1}<T_{k}$ and $T_{k}\in e(X_{k})$. We reach a contradiction. Hence we obtain
\[((X_{k}),(T_{k}),(Y_{k}))\in \mathcal{F}(i,T,r).\]

Next we suppose that there are $T_{r},Y_{r}$ such that
\[T_{r}\in e(X_{r}),Y_{r}\in e(T_{r})\ \mathrm{and}\ Y_{r}\geq T_{r-1}.\]
Note that $a$ is a sink of $Q(i)\setminus\{b_{1},\cdots,b_{r}\}=Q(X_{r})$, hence $I(a)$ must be a direct summand of $T_{r}$. Note also that $b_{r}\not\in Q(T_{r})_{0}$ and $b_{r}\in Q(T_{r-1})_{0}\subset Q(Y_{r})_{0}$. Therefore we have
$T_{r}\in \add Y_{r}.$
In particular, $I(a)$ is a direct summand of $Y_{r}$. Since $Y_{r}\geq T_{r-1}$ and $I(a)$ is an injective module, we obtain 
$I(a)\in \add T_{r-1}$. This is a contradiction.  
\end{proof}
\begin{lemma}
\label{lemma5}
Let $T\in e_{1}(i)$, $((X_{k}),(T_{k}),(Y_{k}))\in \mathcal{F}(i,T,r)$. Then there are $T_{r},Y_{r}$ such that $T_{r}\in e(X_{r})$ and $T_{r-1}\leq Y_{r}\in e(T_{r})$.
\end{lemma}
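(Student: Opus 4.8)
The plan is to show that, for $T\in e_{1}(i)$, every chain $((X_{k}),(T_{k}),(Y_{k}))\in\mathcal{F}(i,T,r)$ is completely rigid, and then to produce $(T_{r},Y_{r})$ by continuing the same pattern one step further. First I would pin down the $X_{k}$'s. Set $X_{0}:=I_{i}$. Since deleting a sink of an injective tilting module again gives the injective tilting module of the smaller quiver, the formula $s(I)=\{I/I(d)\mid d\text{ a sink of }Q(I)\}$ and an induction give $X_{k}=I_{Q(X_{k})_{0}}$ with $Q(X_{k})_{0}=Q(i)_{0}\setminus\{d_{1},\dots,d_{k}\}$, the $d_{j}$'s being deleted in ``sink order''. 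Next, $Q(i)_{0}$ is closed under taking predecessors in $Q$ (an immediate induction from the definition of the $Q(j)$), and $a$ is a source of $Q\setminus Q(i)$, so every predecessor of $a$ lies in $Q(i)_{0}$; hence $T=I_{i}\oplus I(a)=I_{Q(i)_{0}\cup\{a\}}$, and writing $T_{0}:=T$ this reads $T_{0}=I_{Q(X_{0})_{0}\cup\{a\}}$.

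The heart of the matter is the inductive claim that $T_{k}=I_{Q(X_{k})_{0}\cup\{a\}}$ and $Y_{k}=T_{k-1}$ for $k=1,\dots,r-1$. Assume it for $k-1$; then $Q(T_{k-1})_{0}=Q(X_{k-1})_{0}\cup\{a\}$ has $|Q(i)_{0}|-k+2$ vertices. From $Y_{k}\geq T_{k-1}$ we get $Q(T_{k-1})_{0}\subseteq Q(Y_{k})_{0}$; on the other hand $T_{k}\in e(X_{k})$ with $X_{k}$ injective and $Y_{k}\in e(T_{k})$, together with the fact that a cover in $\stilt(\Lambda)$ enlarges the support by at most one vertex, give $|Q(Y_{k})_{0}|\leq|Q(T_{k})_{0}|+1\leq|Q(X_{k})_{0}|+2=|Q(i)_{0}|-k+2$. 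So equality holds throughout. By the explicit description of $e(X_{k})$, the equality $|Q(T_{k})_{0}|=|Q(X_{k})_{0}|+1$ forces $T_{k}=I_{Q(X_{k})_{0}\cup\{v\}}$ for some $v\notin Q(X_{k})_{0}$; since $T_{k}$ is then injective, $|Q(Y_{k})_{0}|=|Q(T_{k})_{0}|+1$ forces $Y_{k}=I_{Q(X_{k})_{0}\cup\{v,w\}}$ with $w\notin Q(X_{k})_{0}\cup\{v\}$, and $Q(Y_{k})_{0}=Q(T_{k-1})_{0}$. Comparing $Q(X_{k})_{0}\sqcup\{v,w\}$ with $Q(X_{k})_{0}\sqcup\{d_{k},a\}$ gives $\{v,w\}=\{d_{k},a\}$; the case $v=d_{k}$ would give $T_{k}=I_{Q(X_{k-1})_{0}}=X_{k-1}$, excluded by $T_{k}\neq X_{k-1}$, so $v=a$ and $w=d_{k}$. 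Thus $T_{k}=I_{Q(X_{k})_{0}\cup\{a\}}$ and $Y_{k}=I_{Q(X_{k-1})_{0}\cup\{a\}}$, which by the inductive hypothesis equals $T_{k-1}$.

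Now I would build the extension. We have $X_{r}=I_{Q(X_{r})_{0}}$, $Q(X_{r-1})_{0}=Q(X_{r})_{0}\cup\{d_{r}\}$, and $T_{r-1}=I_{Q(X_{r-1})_{0}\cup\{a\}}$ (from the previous paragraph when $r\geq 2$, and directly from the first paragraph when $r=1$). Put $T_{r}:=I_{Q(X_{r})_{0}\cup\{a\}}$ and $Y_{r}:=I_{Q(X_{r})_{0}\cup\{a,d_{r}\}}$. Since $a\notin Q(X_{r})_{0}$, the ``adjoin a vertex'' part of the description of $e(X_{r})$ gives $T_{r}\in e(X_{r})$ (and $T_{r}\neq X_{r-1}$ because $a\neq d_{r}$, so the chain genuinely extends); since $d_{r}\notin Q(X_{r})_{0}\cup\{a\}$, the same description applied to the injective tilting module $T_{r}$ gives $Y_{r}\in e(T_{r})$; and $Y_{r}=I_{Q(X_{r-1})_{0}\cup\{a\}}=T_{r-1}$, so in particular $T_{r-1}\leq Y_{r}$. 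This is the required pair.

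The step I expect to be the real obstacle is the rigidity claim of the second paragraph: one must exclude the possibility that some $T_{k}$ is a ``mutation'' predecessor of $X_{k}$ (which keeps the support fixed and breaks the vertex count), and for $Y_{k}$ this rests on the input that a cover in $\stilt(\Lambda)$ changes the support by at most one vertex --- a fact to be quoted from, or read off, the $\tau$-tilting mutation theory. Alternatively, one can argue exactly as in the proof of Lemma~\ref{lemma4}, localising the relevant diamonds of the Hasse quiver to $\stilt$ of $2$-point quivers via Lemma~\ref{1} and the reduction of \cite{J}.
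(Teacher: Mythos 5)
Your proof is correct and follows essentially the same route as the paper: an induction using the fact that each Hasse cover changes the support by at most one vertex, combined with $Q(T_{k-1})_0\subseteq Q(Y_k)_0$, to force $Q(T_k)_0=Q(X_k)_0\cup\{a\}$ and hence $T_k=I_{Q(X_k)_0\cup\{a\}}$, followed by the explicit construction $T_r=I_{Q(X_r)_0\cup\{a\}}$ and $Y_r=I_{Q(X_r)_0\cup\{a,d_r\}}=T_{r-1}$. The only cosmetic difference is that you identify $Y_k$ and $Y_r$ exactly as $T_{k-1}$ and $T_{r-1}$, where the paper only tracks their supports and then invokes minimality of the injective tilting module to get $Y_r\geq T_{r-1}$.
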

\begin{proof}
Let $T=I_{i}\oplus I(a)$. By definition, there exists $(b_{k})_{k=1,\cdots, r}\in Q(i)^{r}$ such that \[X_{k}=I_{i}/\bigoplus_{j\leq k} I(b_{j}).\]
 By using induction, we show that \[(\ast)\ \ \ Q(Y_{k})=Q(X_{k})\cup \{a,b_{k}\}\  \mathrm{and}\  Q(T_{k})=Q(X_{k})\cup \{a\}.\]

First we assume $k=1$. Then \[Q(X_{1})_{0}\subset Q(T_{1})_{0}\subset Q(Y_{1})_{0}\  \mathrm{and}\  Q(X_{1})_{0}\cup \{a,b_{1}\}=Q(T)_{0}\subset Q(Y_{1})_{0}.\] Note that $\#Q(Y_{1})_{0}-\#Q(T_{1})_{0}\leq 1$, $\#Q(T_{1})_{0}-\#Q(X_{1})_{0}\leq 1$ and $b_{1}\not\in Q(T_{1})_{0}$. Therefore we have 
\[Q(Y_{1})=Q(X_{1})\cup \{a,b_{1}\}\  \mathrm{and}\  Q(T_{1})=Q(X_{1})\cup \{a\}.\]

Next we assume $k>1$ and $(\ast)$ hold for $k-1$. Then, similar to the case $k=1$, we can check that \[Q(Y_{k})=Q(X_{k})\cup \{a,b_{k}\}\  \mathrm{and}\  Q(T_{k})=Q(X_{k})\cup \{a\}.\]   
In particular, we have $T_{k}=I_{(Q(i)\setminus\{b_{1},\cdots,b_{k} \})\cup \{a\}}$.

We now let $T_{r}:=I_{(Q(i)\setminus\{b_{1},\cdots,b_{r} \})\cup \{a\}}$
and let $Y_{r}$ be a unique direct predecessor of $T_{r}$ with $Q(Y_{r})_{0}=Q(T_{r})_{0}\cup \{b_{r}\}=Q(T_{r-1})_{0}$. Since $T_{r-1}$ is injective $k(Q(T_{r-1}))$-tilting module, we have $Y_{r}\geq T_{r-1}$.  
\end{proof}

Note that $I_{i+1}$ is a minimum element of $\bigcap_{X\in e_{1}(i)}\{T\in \stilt(\Lambda)\mid T\geq X\}$. Therefore, by combining Lemma\;\ref{lemma1}, Lemma\;\ref{lemma2}, 
Lemma\;\ref{lemma3}, Lemma\;\ref{lemma4} and Lemma\;\ref{lemma5}, we get following.
\begin{corollary}
$\tilt(\Lambda)$ is determined by poset-structure of $\stilt(\Lambda)$. In particular, if $\Lambda$ and $\Gamma$ are two finite dimensional basic hereditary algebras and $\rho$ is a poset isomorphism
from $\stilt(\Lambda)$ to $\stilt(\Gamma)$, then $\rho$ induces  a poset isomorphism \[\rho|_{\tilt(\Lambda)}:\tilt(\Lambda)\simeq \tilt(\Gamma).\]
\end{corollary}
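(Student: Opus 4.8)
The plan is to recognise $\tilt(\Lambda)$ inside the poset $\stilt(\Lambda)$ as the principal filter generated by a single distinguished element, the injective cogenerator $I_m=D\Lambda$, and to show that this element can be located using the order alone. First I would record the elementary fact that, over the hereditary algebra $\Lambda=kQ$, a support tilting module $M$ is a genuine tilting module if and only if it is sincere, i.e. $Q(M)_0=Q_0$ (by definition $M$ is a tilting $kQ(M)$-module, so $M\in\tilt(\Lambda)$ iff $Q(M)=Q$). Since $Q(I_m)_0=Q_0$ and $\Ext^{1}_{\Lambda}(-,I_m)=0$, the description of the order on $\stilt(\Lambda)$ gives $M\geq I_m\Leftrightarrow Q(M)_0\supseteq Q_0\Leftrightarrow Q(M)_0=Q_0$, so
\[\tilt(\Lambda)=\{\,M\in\stilt(\Lambda)\mid M\geq I_m\,\};\]
moreover, since any two sincere support tilting modules have the same vertex set, the order of $\stilt(\Lambda)$ restricts on this filter to the relation $T\geq T'\Leftrightarrow\Ext^{1}_{\Lambda}(T,T')=0$, which is exactly the Happel--Unger order. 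Hence everything reduces to locating $I_m$ in the abstract poset.

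Next I would package Lemmas \ref{lemma1}--\ref{lemma5} into a recursive, purely order-theoretic procedure that recovers the chain $I_0<I_1<\cdots<I_m$. It begins with $I_0=0$, the least element of $\stilt(\Lambda)$. Assume $I_i$ has been found. Its direct predecessors decompose as $e(I_i)=e_1(i)\sqcup e_2(i)\sqcup e_3(i)$, and I claim a cover $T\in e(I_i)$ lies in $e_1(i)$ precisely when (a) there are no $X\in e(T)$, $Y\in e(I_i)$, $Z\in e(Y)$ with $X>Z$, and (b) for every $((X_k),(T_k),(Y_k))\in\mathcal{F}(i,T)$ there exist $T_r\in e(X_r)\setminus\{X_{r-1}\}$ and $Y_r\in e(T_r)$ with $Y_r\geq T_{r-1}$. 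Indeed, Lemma \ref{lemma3} gives (a) and Lemma \ref{lemma5} gives (b) whenever $T\in e_1(i)$; conversely Lemma \ref{lemma2} shows any $T\in e_2(i)$ fails (a) and Lemma \ref{lemma4} shows any $T\in e_3(i)$ fails (b), so (a) and (b) together single out $e_1(i)$ among the covers of $I_i$. (For $i=0$ one uses instead Lemma \ref{lemma1} and the remark following it, which isolate the injective simples among the covers of $0$.) Having recovered $e_1(i)$, I obtain $I_{i+1}$ as the minimum of $\bigcap_{X\in e_1(i)}\{\,T\in\stilt(\Lambda)\mid T\geq X\,\}$ (the observation recorded just before the Corollary), and I stop at the first index $i$ with $e_1(i)=\emptyset$; this index is $m$ and the element produced is $I_m=D\Lambda$. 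Since every ingredient --- least element, Hasse covers $e(-)$ and $s(-)$, the families $\mathcal{F}(i,T)$, the relations $\geq$ and $>$, intersections and minima --- is formulated solely in the language of the partial order, this procedure depends only on the isomorphism class of the poset $\stilt(\Lambda)$.

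Finally I would conclude. The first assertion is then immediate: $\tilt(\Lambda)$ is, as a subset and as a subposet of $\stilt(\Lambda)$, the principal filter of the order-theoretically constructed element $I_m$, hence is determined by the poset structure. For the second, let $\rho\colon\stilt(\Lambda)\to\stilt(\Gamma)$ be a poset isomorphism. Since $\rho$ preserves everything the procedure uses, running it in parallel shows $\rho(I_i^{\Lambda})=I_i^{\Gamma}$ at each stage and that the two chains terminate at the same index (the stopping rule ``$e_1(i)=\emptyset$'' is itself order-theoretic); in particular $\rho(D\Lambda)=\rho(I_m^{\Lambda})=I_m^{\Gamma}=D\Gamma$. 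As $\rho$ and $\rho^{-1}$ are order-preserving, $\rho$ carries the filter $\{\,N\mid N\geq D\Lambda\,\}=\tilt(\Lambda)$ bijectively onto $\{\,N\mid N\geq D\Gamma\,\}=\tilt(\Gamma)$ and the restriction is an order isomorphism, giving $\rho|_{\tilt(\Lambda)}\colon\tilt(\Lambda)\simeq\tilt(\Gamma)$.

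The genuine content --- and the only real obstacle --- is the order-theoretic separation of the three kinds of covers of $I_i$, i.e. Lemmas \ref{lemma1}--\ref{lemma5}, where the hereditary hypothesis and the explicit shape of the two-vertex support tilting posets of Example \ref{ex1} (via the graph automorphism induced by $\Tau^{-}$) are used. Once those separations are granted, the induction above is routine bookkeeping; the points that still need mild care are the base case $i=0$, where one works with the simples $S(a)$ rather than the injectives $I(a)$ and matches them through $I(a)=S(a)$ for a source $a$, and the verification that ``$e_1(i)=\emptyset$'' detects exactly when $Q(i)=Q$, so that the stopping index and the element $D\Lambda$ are intrinsic to the poset.
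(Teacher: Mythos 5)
Your proposal is correct and follows essentially the same route as the paper: it identifies $e_{1}(i)$ among the covers of $I_{i}$ by exactly the order-theoretic criteria supplied by Lemmas \ref{lemma1}--\ref{lemma5}, recovers $I_{i+1}$ as the minimum of $\bigcap_{X\in e_{1}(i)}\{T\geq X\}$, and realizes $\tilt(\Lambda)$ as the principal filter above $I_{m}=D\Lambda$. The paper leaves this assembly implicit ("by combining Lemma 3.2, \dots"), so your write-up is simply a more explicit version of the intended argument.
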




\end{document}